\newtheorem{theo}{Theorem}[section]
\newtheorem{lemm}[theo]{Lemma}
\newtheorem{defi}[theo]{Definition}
\newtheorem{prop}[theo]{Proposition}
\newtheorem{rema}[theo]{Remark}
\numberwithin{equation}{section}
\begin{document}

\title{Global well-posedness for the full compressible Navier-\\ Stokes equations }

\author{ \footnote{Email Address:lijl29@mail2.sysu.edu.cn (J. Li),\  mcsyzy@mail.sysu.edu.cn (Z. Yin),\  pingxiaozhai@163.com (X. Zhai).}
Jinlu $\mbox{Li}^1$   \quad Xiaoping $\mbox{Zhai}^1$  \quad and \quad Zhaoyang $\mbox{Yin}^{1,2}$ \\
 \small $^1\mbox{Department}$ of Mathematics, Sun Yat-sen University, \\
 \small Guangzhou, 510275, China\\
\small $^2\mbox{Faculty}$ of Information Technology,
\\ \small Macau University of Science and Technology, Macau, China}

\date{}
\maketitle

\begin{abstract}
In this paper, we mainly study the Cauchy problem for the full compressible Navier-Stokes equations in Sobolev spaces. We establish the global well-posedness of the equations with small initial data by using Friedrich's method and compactness arguments.

\vspace*{1em}

\noindent {\bf Key Words:} the full compressible Navier-Stokes equations, global well-posedness, Friedrich's method, compactness arguments.

\vspace*{1em}

\noindent {\bf Mathematics Subject Classification (2010):} 35Q35, 35K65, 76N10
\end{abstract}

\section{Introduction and the main result}
\quad \quad The full compressible Navier-Stokes equations can be written in the sense of Eulerian coordinates in $\mathbb{R}^d$ as follows
\begin{eqnarray}\label{1.1}
\left\{\begin{aligned}
&\partial_t\rho+\mathrm{div}(\rho u)=0,\quad\quad(t,x)\in\mathbb{R}^+\times\mathbb{R}^d, \\
&\partial_t(\rho u)+\mathrm{div}(\rho u\otimes u)+\nabla P=\mathrm{div} \mathbb{S},\quad\quad(t,x)\in\mathbb{R}^+\times\mathbb{R}^d,\\
&\partial_t(\rho Q(\theta))+\mathrm{div}(\rho uQ(\theta))+\mathrm{div} q=\mathbb{S}:\nabla u-\theta P_{\theta}\mathrm{div} u, \quad\quad(t,x)\in\mathbb{R}^+\times\mathbb{R}^d,\\
\end{aligned}\right.
\end{eqnarray}
where $\rho(t,x)$, $u(t,x)$ and $\theta(t,x)$ stand for the density, the velocity and the temperature of the fluid. Here $P(\rho,\theta)=P_e(\rho)+\theta P_{\theta}(\rho)$, where $\theta P_{\theta}(\rho)$, $P_{e}(\rho)$ denote thermal pressure and elastic pressure respectively. $\mathbb{S}=\mu(\rho)(\nabla u+(\nabla u)^{T})$ represents the viscous stress tensor, which characterizes the measure of resistance of the fluid to flow. $Q(\theta)=\int^{\theta}_0c_v(z)\mathrm{d} z$ and $c_v(\theta)$ represents the specific heat at constant volume. The heat conduction $q$ is given by $q=-\kappa(\theta)\nabla \theta$ (see e.g. the introduction of \cite{Feireisl} ).

 This model has been studied by many mathematicians and made a big progress in the past several years, due to the significance of the physical background.
There have been lots of works about the existence, uniqueness, regularity and asymptotic behavior of the solutions. While, because of the stronger nonlinearity in \eqref{1.1} compared with the Navier-Stokes equations for isentropic flow (no temperature equation), many known mathematical results focused only on the absence of vacuum (vacuum means $\rho=0$). The local existence and uniqueness of smooth solutions for the system \eqref{1.1} were proved by Nash \cite{nash} for smooth initial data without vacuum. Itaya in \cite{itaya} considered the Cauchy problem of the Navier-Stokes equations in $\mathbb{R}^3$ with heat-conducting fluid and obtained the local classical solutions in H$\ddot{\mathrm{o}}$lder spaces. The same result was obtained by Tani in \cite{tani} for IBVP with $\inf \rho_0>0$. Later on, Matsumura and Nishida \cite{mn} proved the global well-posedness for smooth data close to equilibrium, see also \cite{ks} for one dimension. On the existence, asymptotic behavior of the weak solutions of the full compressible Navier-Stokes equations with $\inf \rho_0>0$, please refer for instance to \cite{jiangsong1998}, \cite{jiangsong1999} for the existence of weak solutions in 1D and for the existence of spherically symmetric weak solutions in $\mathbb{R}^d(d=2,3)$, and refer to \cite{hoff} for the existence of spherically and cylindrically symmetric weak solutions in $\mathbb{R}^3$, and refer to \cite{Feireisl3} for the existence of variational solutions in a bounded domain in $\mathbb{R}^d(d=2,3).$

In the presence of vacuum, Feireisl in \cite{Feireisl} got the existence of so-called variational solutions in $\mathbb{R}^d\ (d\ge 2)$. The temperature equation in \cite{Feireisl} is satisfied only as an inequality in the sense of distributions. In order that the equations are satisfied as equalities in the sense of distributions, Bresch and Desjardins in \cite{bd} proposed some different assumptions from \cite{Feireisl}, and obtained the existence of global weak solutions to the full compressible Navier-Stokes equations with large initial data in $T^3$ or $\mathbb{R}^3$. Huang and Li in \cite{huangxiangdi4} established the global existence and uniqueness of classical solutions to the three-dimensional full compressible Navier-Stokes system in  $\mathbb{R}^3$ with smooth initial data
which are of small energy but possibly large oscillations where the initial density is allowed to vanish. Wen and Zhu in \cite{wenhuanyao} established the global existence of spherically and cylindrically symmetric classical and strong solutions of the full compressible Navier-Stokes equations in $\mathbb{R}^3$. The result in \cite{wenhuanyao} allowed the initial data may be large and the initial density may vanish. We also emphasis some blowup criterions in  \cite{huangxiangdi2}, \cite{huangxiangdi1}, \cite{huangxiangdi3}  (see also the reference therein). The reader may refer to \cite{chenqionglei} or \cite{wenhuanyao} for more recent advances on the subject.

Let us also recall that in the barotropic case, the critical Besov regularity was first considered by Danchin in an $L^2$ type framework to obtain a global solution \cite{D} for small perturbations of a stable constant state $\bar{\rho}$ with $\bar{\rho}>0$. Since then, there have been a number of refinements as regards admissible exponents for the global existence (see \cite{C.D}, \cite{C.M.Z1} and the references therein). The local-in-time existence issue in the critical regularity framework with both large $\rho_0$ and $u_0$ (with $\rho_0$ bounded away from 0) has been addressed only in the barotropic case. The proof either involves the time-weighted norm or the frequency localization techniques (see \cite{C.M.Z,D1} and \cite{H} for their generalization). The slightly homogeneous case (density close to some constant) is easier and has been investigated for the full Navier-Stokes equations as well in \cite{D2}. Lately, Chikami and Danchin in \cite{C-D} prove the local-in-time existence issue for the more general initial data $\rho_0-1\in \dot{B}^{\frac dp}_{p,1}(\mathbb{R}^d)$, $u_0\in \dot{B}^{-1+\frac dp}_{p,1}(\mathbb{R}^d)$ and $\theta_0\in  \dot{B}^{-2+\frac dp}_{p,1}(\mathbb{R}^d)$ with $\inf\limits_{x\in \mathbb{R}^d}\rho_0(x)>0$.

In the present paper, we will consider the Cauchy problem of (\ref{1.1}) in $\mathbb{R}^d, d=2,3,4$. We  assume that $c_v=1$, $P(\rho,\theta)=P_{e}(\rho)+R\rho\theta$, $q=-\kappa(\theta)\nabla \theta$, $P_{e}(\rho)$, $\kappa(\theta)$, $\mu(\rho)$ are smooth functions about $\rho$, $\theta$. Then the system \eqref{1.1} becomes
\begin{eqnarray}\label{1.1+1}
\left\{\begin{aligned}
&\partial_t\rho+\mathrm{div}(\rho u)=0, \\
&\partial_t(\rho u)+\mathrm{div}(\rho u\otimes u)+\nabla P=\mathrm{div} \mathbb{S}, \\
&\partial_t(\rho \theta)+\mathrm{div}(\rho u\theta)+\mathrm{div} q=\mathbb{S}:\nabla u-R\rho\theta\mathrm{div} u. \\
\end{aligned}\right.
\end{eqnarray}
Due to the term $\mathrm{div} q=\mathrm{div}(\kappa(\theta)\nabla \theta)=\kappa(\theta)\Delta\theta+\kappa'(\theta)|\nabla \theta|^2$, obviously, the system  (\ref{1.1+1}) has no scaling invariance compared with the compressible Navier-Stokes equations with constant viscosities.
Let $\bar{\rho}$
 be a fixed positive constant. We look for the solutions,
$(\rho(x, t), u(x, t), \theta(x, t))$ to the Cauchy problem for \eqref{1.1+1} with the far field behavior:
$$\rho(x, t)\to\bar{\rho},\quad\quad u(x, t)\to0,\quad\quad \theta(x, t)\to 0\quad \mathrm{as}
\quad|x|\to \infty, t>0,$$
and initial data:
$$\rho|_{t=0}=\rho_0(x),\quad u|_{t=0}=u_0(x),\quad \theta|_{t=0}=\theta_0(x),\quad \mathrm{in } \quad{\mathbb R}^d.$$
 For notational simplicity, we assume $\bar{\rho}=1$, $R=1$, $P'_{e}(1)=1$, $\mu(1)>0$ and $\kappa(0)=\kappa>0$. Substituting $\rho$ by $\rho+1$, we reduce the system $(\ref{1.1+1})$ to the following equations
\begin{eqnarray}\label{1.2}
\left\{\begin{aligned}
&\partial_t\rho+\mathrm{div} u=-\mathrm{div}(\rho u),\\
&\partial_t u+u\cdot \nabla u+\nabla \rho+\nabla \theta-\mu(1)(\Delta u+\nabla\mathrm{div} u)
=F_1(\rho,u,\theta),\\
&\partial_t\theta-\kappa\Delta \theta+u\cdot \nabla \theta =F_2(\rho,u,\theta),\\
&(\rho,u,\theta)|_{t=0}=(\rho_0,u_0,\theta_0),
\end{aligned}\right.
\end{eqnarray}
where
\begin{align*}
F_1(\rho,u,\theta)=&(\frac{\mu(\rho+1)}{\rho+1}-\mu(1))(\Delta u+\nabla\mathrm{div} u)
 +\frac{\theta}{\rho+1}\nabla \rho\nonumber\\
 &+\frac{\mu'(\rho+1)}{\rho+1}(\nabla u+(\nabla u)^T)\cdot\nabla \rho-(P'_e(\rho+1)-1)\nabla \rho,
\end{align*}
\begin{align*}
F_2(\rho,u,\theta)=&(\frac{\kappa(\theta)}{\rho+1}-\kappa)\Delta \theta+\frac{\mu(\rho+1)}{\rho+1}(\nabla u+(\nabla u)^T):\nabla u-\theta\mathrm{div} u \nonumber\\
&+\frac{\kappa'(\theta)}{\rho+1}|\nabla \theta|^2.
\end{align*}

When solving $\eqref{1.2}$, the main difficulty is that the system is only partially parabolic, owing to the mass conservation equation which is of hyperbolic type. This precludes any attempt to use the Banach fixed point theorem in a suitable space. As a matter of fact, global existence for small initial data may be proved through a suitable norm uniform bound estimate scheme and compactness methods. In this paper, we first use the Littlewood-Paley decomposition theory in Sobolev spaces to establish a losing energy estimate. Then we apply the  classical Friedrich's regularization method to build global approximate solutions and prove the existence of a solution by compactness arguments for the small initial data. Moreover, we can obtain the solution to the system (\ref{1.2}) is unique.

The main theorem of this paper reads as follows.
\begin{theo}\label{th1.1}
Let $d=2,3,4$, $s>\frac d2$. For any $\rho_0\in H^{s}(\mathbb{R}^d)$, $u_0\in H^{s}(\mathbb{R}^d)$, $\theta_0\in H^{s}(\mathbb{R}^d)$,
 there exists a constant $\eta>0$ such that
$$
||\rho_0||^2_{H^{s}(\mathbb{R}^d)}+||u_0||^2_{H^{s}(\mathbb{R}^d)}+||\theta_0||^2_{H^{s}(\mathbb{R}^d)}\leq \eta ,
$$
then the system $(\ref{1.2})$ has a unique global solution $(\rho,u,\theta)$ satisfying
\begin{align*}
&(\rho,u,\theta)\in \mathcal{C}(\mathbb{R}^+;H^{s}(\mathbb{R}^d)\times H^{s}(\mathbb{R}^d)\times H^{s}(\mathbb{R}^d)),
\\ &(\nabla \rho,\nabla u,\nabla \theta)\in L^2(\mathbb{R}^+;H^{s-1}(\mathbb{R}^d)\times H^{s}(\mathbb{R}^d)\times H^{s}(\mathbb{R}^d)).
\end{align*}
\end{theo}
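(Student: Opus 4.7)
The plan is to combine a Lyapunov-type modified energy estimate in $H^s$ with a Friedrichs regularization for the construction of approximate solutions, then pass to the limit by compactness, and conclude by an energy argument for uniqueness. The central difficulty is that \eqref{1.2} is only partially parabolic --- it provides dissipation for $u$ and $\theta$ but not directly for the hyperbolic unknown $\rho$ --- so one has to recover a substitute dissipation on $\rho$ out of the pressure term $\nabla\rho$ in the momentum equation.

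First, I would apply $\Lambda^s$ (equivalently, a dyadic block $\Delta_q$ followed by summation with weights $2^{2qs}$) to each equation of \eqref{1.2}, pair with the corresponding unknown in $L^2$, and use the cancellation $\int\nabla\rho\cdot u\,dx+\int\rho\,\mathrm{div}\,u\,dx=0$ to derive a basic identity of the form
\begin{equation*}
\tfrac{1}{2}\tfrac{d}{dt}\!\left(\|\rho\|_{H^s}^2+\|u\|_{H^s}^2+\|\theta\|_{H^s}^2\right)+\mu(1)\|\nabla u\|_{H^s}^2+\mu(1)\|\mathrm{div}\,u\|_{H^s}^2+\kappa\|\nabla\theta\|_{H^s}^2\le \mathcal{N}(t),
\end{equation*}
where $\mathcal{N}(t)$ collects the convective terms, the commutator contributions from $[\Lambda^s,u\cdot\nabla]$, and Moser-type composition bounds applied to the smooth functions in $F_1,F_2$ (requiring $\|\rho\|_{L^\infty}$ small so that $\rho+1$ stays bounded away from $0$). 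To inject dissipation for $\rho$, I would add a Hoff/Matsumura-Nishida-type cross-correction
\begin{equation*}
\mathcal{E}(t):=\tfrac{1}{2}\!\left(\|\rho\|_{H^s}^2+\|u\|_{H^s}^2+\|\theta\|_{H^s}^2\right)+\varepsilon\,\langle u,\nabla\rho\rangle_{H^{s-1}}.
\end{equation*}
Differentiating the cross term through the mass and momentum equations produces $-\varepsilon\|\nabla\rho\|_{H^{s-1}}^2$ on the good side, plus an $\varepsilon\|\mathrm{div}\,u\|_{H^{s-1}}^2$ and a $\varepsilon\|\nabla\theta\|_{H^{s-1}}\|\nabla\rho\|_{H^{s-1}}$ absorbable by Cauchy-Schwarz. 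For $\varepsilon$ small enough $\mathcal{E}$ stays equivalent to $\|(\rho,u,\theta)\|_{H^s}^2$, and I would obtain the ``losing'' energy inequality
\begin{equation*}
\tfrac{d}{dt}\mathcal{E}+c_0\!\left(\|\nabla\rho\|_{H^{s-1}}^2+\|\nabla u\|_{H^s}^2+\|\nabla\theta\|_{H^s}^2\right)\le C\,\mathcal{E}^{1/2}\!\left(\|\nabla\rho\|_{H^{s-1}}^2+\|\nabla u\|_{H^s}^2+\|\nabla\theta\|_{H^s}^2\right),
\end{equation*}
which closes by a standard continuity bootstrap provided $\eta$ is small enough.

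Next I would build Friedrichs-regularized solutions $(\rho_n,u_n,\theta_n)$ by cutting the initial data and the nonlinearities to frequencies $|\xi|\le n$ via $J_n:=\mathcal{F}^{-1}\mathbf{1}_{B(0,n)}\mathcal{F}$; the truncated system is an ODE in the Banach space $J_nL^2$, so Cauchy-Lipschitz yields a unique maximal solution. The modified energy estimate transfers uniformly in $n$, giving global existence of the approximates together with uniform bounds for $(\rho_n,u_n,\theta_n)$ in $\mathcal{C}_b(\mathbb{R}^+;H^s)$ and for $(\nabla\rho_n,\nabla u_n,\nabla\theta_n)$ in $L^2(\mathbb{R}^+;H^{s-1}\times H^s\times H^s)$. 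Time derivatives are controlled in a negative-order Sobolev space directly from the equations; Aubin-Lions then delivers a subsequence converging strongly in $L^2_{\mathrm{loc}}(\mathbb{R}^+;H^{s'}_{\mathrm{loc}})$ for every $s'<s$, which is sufficient to pass to the limit in all products and in the compositions of $F_1,F_2$. Uniqueness would follow by writing the linearized system for the difference of two solutions and running a Gronwall argument at the $L^2$ (or $H^{s-1}$) level, using the smallness of the coefficients to absorb the bad terms.

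The main obstacle will be the construction of the Lyapunov functional $\mathcal{E}$: one has to tune $\varepsilon$ so that $\mathcal{E}$ remains equivalent to the plain $H^s$ norm while the cross term genuinely generates $\|\nabla\rho\|_{H^{s-1}}^2$ even after absorbing the temperature coupling $\nabla\theta$ from the momentum equation and the variable-coefficient contributions of $F_1$. A secondary technical point is handling the nonlinear compositions $\mu(\rho+1)/(\rho+1)$, $\kappa(\theta)/(\rho+1)$, $P_e'(\rho+1)$ in the Sobolev setting --- routine through composition/Moser estimates valid for $s>d/2$ once $\rho$ is small in $L^\infty$ --- and checking at the Friedrichs level that the projector $J_n$ is compatible with these compositions, so that the truncated quantities still satisfy the same energy inequality uniformly in $n$.
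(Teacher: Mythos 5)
Your proposal is correct and follows essentially the same route as the paper: a modified energy functional with the cross term $\lambda\langle u,\nabla\rho\rangle$ (implemented in the paper blockwise, multiplying the $\Delta_j$-localized equations by $\rho_j-\Delta\rho_j-\lambda\,\mathrm{div}\,u_j$, $u_j-\Delta u_j+\lambda\nabla\rho_j$, $\Lambda(\theta_j-\Delta\theta_j)$ and summing with weights $2^{2j(s-1)}$) to recover the dissipation $\|\nabla\rho\|_{H^{s-1}}^2$, followed by Friedrichs regularization, a bootstrap for uniform bounds, compactness, and an $L^2$-level Gronwall argument for uniqueness. The only differences are notational (continuous $\Lambda^s$ versus the paper's per-block $H^1$ energies with separate treatment of $\Delta_{-1}$), not substantive.
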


The paper is organized as follows. In Section 2, we recall the Littlewood-Paley theory and give some properties of inhomogeneous Sobolev spaces. In Section 3, we deduce a prior estimates of solutions to the system (\ref{1.2}). In Section 4, we prove the global existence and uniqueness of the solution to the system (\ref{1.2}) by Fredrich's method  for the small initial data.

\vspace*{1em}

\noindent\textbf{Notations.} In the following, we denote by $(\cdot,\cdot)$ the $L^2$ scalar product. Given a Banach space $X$, we
denote its norm by $\|\cdot\|_{X}$. The symbol $A\lesssim B$ denotes that there exists a constant $c>0$ independent of $A$ and $B$, such that $A\leq c B$. The symbol $A\approx B$ represents $A\lesssim B$ and $B\lesssim A$.

\section{The Littlewood-Palely theory}

\quad \quad In this section, we are going to recall the dyadic partition of unity in the Fourier variable, the so-called, Littlewood-Paley theory, and the definition of Besov spaces. Part of the materials presented here can be found in \cite{B.C.D}. Let $\mathcal{S}(\mathbb{R}^d)$ be the Schwartz class of rapidly decreasing functions. For given $f\in \mathcal{S}(\mathbb{R}^d)$, its Fourier transform $\mathcal{F}f=\widehat{f}$ and its inverse Fourier transform $\mathcal{F}^{-1}f=\breve{f}$, respectively, defined by
\begin{align*}
\widehat{f}(\xi):=\int_{\mathbb{R}^d}e^{ix\cdot\xi}f(x)\mathrm{d} x,\quad \mathrm{and} \quad \breve{f}(x):=\frac{1}{(2\pi)^d}\int_{\mathbb{R}^d}e^{-ix\cdot\xi}f(\xi)\mathrm{d} \xi.
\end{align*}
Let $\varphi\in \mathcal{S}(\mathbb{R}^d)$ with values in $[0,1]$ such that $\varphi$ is supported in the ring $\mathcal{C}\triangleq \{\xi\in\mathbb{R}^d:\frac 3 4\leq|\xi|\leq \frac 8 3\}$ and $\chi$ is supported in the ring $\mathcal{B}\triangleq \{\xi\in\mathbb{R}^d:|\xi|\leq \frac 4 3\}$. Moreover,
\begin{align*}
\sum_{j\in\mathbb{Z}}\varphi(2^{-j}\xi)=1 \quad  \mathrm{for} \ \mathrm{any} \ \xi\in\mathbb{R}^d\setminus\{0\},
\end{align*}
and
\begin{align*}
\chi(\xi)+\sum_{j\geq0}\varphi(2^{-j}\xi)=1 \quad  \mathrm{for} \ \mathrm{any} \ \xi\in\mathbb{R}^d.
\end{align*}
Then for all $u \in \mathcal{S}'(\mathbb{R}^d)$, we can define the nonhomogeneous dyadic blocks as follows:
\begin{equation*}
\Delta_{-1}{u}\triangleq \chi(D)u=\mathcal{F}^{-1}(\chi \mathcal{F}u),\quad \Delta_j{u}\triangleq \varphi(2^{-j}D)u=\mathcal{F}^{-1}(\varphi(2^{-j}\cdot)\mathcal{F}u),\,\ \mathrm{if} \,\ j\geq 0.
\end{equation*}
We also can define the homogeneous dyadic blocks as follows:
\begin{equation*}
\dot{\Delta}_j{u}\triangleq \varphi(2^{-j}D)u=\mathcal{F}^{-1}(\varphi(2^{-j}\cdot)\mathcal{F}u),\,\ \mathrm{for} \,\ j\in \mathbb{Z}.
\end{equation*}
Hence, $ u={\sum\limits_{j\geq-1}}\Delta_j{u}$ in $\mathcal{S}'(\mathbb{R}^d)$ is called the nonhomogeneous Littlewood-Paley decomposition of $u$.

\begin{defi}
Let $s\in\mathbb{R}$, we set
\begin{equation*}
||f||_{H^s(\mathbb{R}^d)}\triangleq \Big(\sum_{j\geq-1}2^{2js}||\Delta_j{f}||^2_{L^2(\mathbb{R}^d)}\Big)^{\frac12}.
\end{equation*}
We define the homogeneous Hilbert space $H^s(\mathbb{R}^d)\triangleq \{f \in \mathcal{S}'(\mathbb{R}^d): ||f||_{H^s(\mathbb{R}^d)}<\infty\}$.
\end{defi}

\begin{rema}
Let $s\in\mathbb{R}$, we also set
\begin{equation*}
||f||_{\dot{H}^s(\mathbb{R}^d)}\triangleq \Big(\sum_{j\in \mathbb{Z}}2^{2js}||\dot{\Delta}_j{f}||^2_{L^2(\mathbb{R}^d)}\Big)^{\frac12}.
\end{equation*}
We can deduce that there exist two positive constants $c_0$ and $C_0$ such that
\begin{equation*}
c_0||f||_{\dot{H}^{s+1}(\mathbb{R}^d)}\leq ||\nabla f||_{\dot{H}^s(\mathbb{R}^d)}\leq C_0 ||f||_{\dot{H}^{s+1}(\mathbb{R}^d)}.
\end{equation*}
If $s>0$, then we have $||f||_{\dot{H}^s(\mathbb{R}^d)}\lesssim ||f||_{H^s(\mathbb{R}^d)}$.
\end{rema}
The following Bernstein's lemma will be repeatedly used throughout this paper.
\begin{lemm}\label{bernstein}
Let $\mathcal{B}$ be a ball and $\mathcal{C}$ a ring of $\mathbb{R}^d$. A constant $C$ exists so that for any positive real number $\lambda$, any
non-negative integer k, any smooth homogeneous function $\sigma$ of degree m, and any couple of real numbers $(a, b)$ with
$1\le a \le b$, there hold
\begin{align*}
&&\mathrm{Supp} \,\hat{u}\subset\lambda \mathcal{B}\Rightarrow\sup_{|\alpha|=k}\|\partial^{\alpha}u\|_{L^b}\le C^{k+1}\lambda^{k+d(\frac1a-\frac1b)}\|u\|_{L^a},\\
&&\mathrm{Supp} \,\hat{u}\subset\lambda \mathcal{C}\Rightarrow C^{-k-1}\lambda^k\|u\|_{L^a}\le\sup_{|\alpha|=k}\|\partial^{\alpha}u\|_{L^a}
\le C^{k+1}\lambda^{k}\|u\|_{L^a},\\
&&\mathrm{Supp} \,\hat{u}\subset\lambda \mathcal{C}\Rightarrow \|\sigma(D)u\|_{L^b}\le C_{\sigma,m}\lambda^{m+d(\frac1a-\frac1b)}\|u\|_{L^a}.
\end{align*}
\end{lemm}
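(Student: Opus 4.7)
The plan is to reduce all three inequalities to Young's convolution inequality applied against a suitably rescaled Schwartz kernel. The underlying mechanism is identical in each case: because $\hat u$ is supported in $\lambda\mathcal{B}$ or $\lambda\mathcal{C}$, I can freely insert a smooth cutoff $\tilde\chi$ (resp.\ $\tilde\varphi$) that equals $1$ on the ball (resp.\ the ring) and is supported in a slightly larger set, so any Fourier multiplier I wish to apply becomes convolution against the inverse Fourier transform of a Schwartz symbol, and the only question is how the relevant $L^r$ norm scales in $\lambda$.

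For the first inequality I would choose $\tilde\chi\in\mathcal{S}(\mathbb{R}^d)$ with $\tilde\chi\equiv 1$ on $\mathcal{B}$, so that $\widehat{\partial^\alpha u}(\xi)=(i\xi)^\alpha\tilde\chi(\xi/\lambda)\hat u(\xi)$ and hence $\partial^\alpha u=g_{\alpha,\lambda}\ast u$ with $g_{\alpha,\lambda}(x)=\lambda^{d+k}h_\alpha(\lambda x)$ and $h_\alpha=\mathcal{F}^{-1}((i\xi)^\alpha\tilde\chi)$ Schwartz. Young's inequality with $1+1/b=1/r+1/a$ then yields $\|\partial^\alpha u\|_{L^b}\le \|g_{\alpha,\lambda}\|_{L^r}\|u\|_{L^a}$, and a change of variables supplies the factor $\lambda^{k+d(1/a-1/b)}$. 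The third inequality is proved in exactly the same way: pick $\tilde\varphi$ supported in a slightly enlarged ring and equal to $1$ on $\mathcal{C}$, which crucially kills the origin where $\sigma$ may fail to be smooth, exploit the homogeneity $\sigma(\xi)=\lambda^m\sigma(\xi/\lambda)$, and observe that $\sigma\tilde\varphi\in\mathcal{S}$ so its inverse Fourier transform is Schwartz; the same rescaling-plus-Young argument produces the factor $\lambda^{m+d(1/a-1/b)}$.

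The only genuinely new step is the lower bound in the second inequality, where I must reconstruct $u$ from its $k$-th derivatives. Here I would use that $\sum_{|\alpha|=k}|\xi^\alpha|^2$ is bounded below on $\mathcal{C}$, so one can select smooth functions $\eta_\alpha$ supported in a slight enlargement of $\mathcal{C}$ satisfying $\sum_{|\alpha|=k}\eta_\alpha(\xi)(i\xi)^\alpha=1$ on $\mathcal{C}$. Rescaling then gives the reconstruction $u=\sum_{|\alpha|=k}\lambda^{-k}K_{\alpha,\lambda}\ast\partial^\alpha u$ with $K_{\alpha,\lambda}(x)=\lambda^d\check\eta_\alpha(\lambda x)$ a rescaled Schwartz kernel, and Young's inequality (with exponent $1$ on the kernel so that there is no $\lambda$-loss beyond $\lambda^{-k}$) produces $\lambda^k\|u\|_{L^a}\le C^{k+1}\sup_{|\alpha|=k}\|\partial^\alpha u\|_{L^a}$, which is precisely the required lower estimate. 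The matching upper bound in the ring case is a special case of the ball estimate and requires no new work.

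The principal technical point I would expect to worry about is making the constants in all three estimates genuinely behave like $C^{k+1}$, uniformly in $k$, $\lambda$, and the exponents $a,b$. This amounts to bounding $L^r$ norms of kernels such as $\mathcal{F}^{-1}((i\xi)^\alpha\tilde\chi)$ and $\mathcal{F}^{-1}(\eta_\alpha)$ by constants of the form $C^k$, which follows from Leibniz/binomial estimates on the relevant Schwartz seminorms; this is a standard check but is the one place where care is needed if one wants the constant to be genuinely geometric in $k$.
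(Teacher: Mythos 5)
Your argument is correct and is exactly the standard proof of Bernstein's lemma: the paper itself offers no proof, stating the result as known and attributing the Littlewood--Paley material to \cite{B.C.D}, where precisely this cutoff-plus-rescaled-Young argument (including the reconstruction $\sum_{|\alpha|=k}\eta_\alpha(\xi)(i\xi)^\alpha=1$ on the ring for the reverse inequality) is given. Your closing remark about verifying that the kernel norms grow at most geometrically in $k$ is the right point to flag and is handled in the reference by the same Leibniz-type seminorm estimates you describe.
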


To prove the main theorem, we need the following lemma concerning the product laws in Sobolev spaces, the proof of which is a standard application based on the  Littlewood-Paley theory.
\begin{lemm}\label{le2.3}(see \cite{B.C.D})
Let $\sigma>0$ and $\sigma_1\in \mathbb{R}$. Then we have for all $u,\ v\in H^{\sigma}(\mathbb{R}^d)\cap L^\infty(\mathbb{R}^d)$,
\begin{align*}
&||uv||_{H^\sigma(\mathbb{R}^d)}\lesssim ||u||_{H^\sigma(\mathbb{R}^d)}||v||_{L^\infty(\mathbb{R}^d)}+||v||_{H^\sigma(\mathbb{R}^d)}||u||_{L^\infty(\mathbb{R}^d)},\\
&||uv||_{\dot{H}^\sigma(\mathbb{R}^d)}\lesssim ||u||_{\dot{H}^\sigma(\mathbb{R}^d)}||v||_{L^\infty(\mathbb{R}^d)}+||v||_{\dot{H}^\sigma(\mathbb{R}^d)}||u||_{L^\infty(\mathbb{R}^d)}.
\end{align*}
Moreover, if $d\geq2$, then we have for $u\in H^\sigma(\mathbb{R}^d)\cap H^{\frac d2-1}(\mathbb{R}^d),\ v\in H^{\sigma+1}(\mathbb{R}^d)\cap L^\infty(\mathbb{R}^d)$,
\begin{align*}
||uv||_{\dot{H}^\sigma(\mathbb{R}^d)}\lesssim ||u||_{\dot{H}^\sigma(\mathbb{R}^d)}||v||_{L^\infty(\mathbb{R}^d)}+||u||_{\dot{H}^{\frac d2-1}(\mathbb{R}^d)}||v||_{\dot{H}^{\sigma+1}(\mathbb{R}^d)}.
\end{align*}
If $\sigma>\frac d2$, then $H^{\sigma}(\mathbb{R}^d)$ embeds into $L^\infty(\mathbb{R}^d)$. Also, for all $u,v\in H^\sigma(\mathbb{R}^d)$, there holds
\begin{align*}
||uv||_{H^\sigma(\mathbb{R}^d)}\lesssim ||u||_{H^\sigma(\mathbb{R}^d)}||v||_{H^\sigma(\mathbb{R}^d)}.
\end{align*}
Else if $\sigma_1\leq\frac d2<\sigma$ and $\sigma_1+\sigma>0$, then, for all $u\in H^\sigma(\mathbb{R}^d),\ v\in H^{\sigma_1}(\mathbb{R}^d)$, there holds
\begin{align*}
||uv||_{H^{\sigma_1}(\mathbb{R}^d)}\lesssim ||u||_{H^\sigma(\mathbb{R}^d)}||v||_{H^{\sigma_1}(\mathbb{R}^d)}.
\end{align*}
\end{lemm}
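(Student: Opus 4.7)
The plan is to prove all four inequalities by Bony's paraproduct decomposition
\[
uv = T_u v + T_v u + R(u,v), \qquad T_u v := \sum_{j} S_{j-1}u\,\Delta_j v, \qquad R(u,v):=\sum_{|i-j|\le1}\Delta_i u\,\Delta_j v,
\]
and estimate each piece via the Bernstein inequalities of Lemma~\ref{bernstein} together with the almost-orthogonality of Littlewood-Paley blocks in $L^2$. The basic observation is that $\Delta_j(T_u v)$ and $\Delta_j(T_v u)$ have spectral support in a ring of size $\sim 2^j$ (so summation in $j$ works for any $\sigma\in\mathbb R$), while $\Delta_j R(u,v)$ has support only in a ball of radius $\sim 2^j$, which is why the hypothesis $\sigma>0$ appears in the first bound.

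For the first two bounds, I would write
\[
\|\Delta_j T_u v\|_{L^2}\lesssim \|S_{j-1}u\|_{L^\infty}\|\Delta_j v\|_{L^2}\lesssim \|u\|_{L^\infty}\|\Delta_j v\|_{L^2},
\]
multiply by $2^{j\sigma}$, and sum in $\ell^2(j)$; the same works for $T_v u$ after swapping $u$ and $v$. For $R(u,v)$ I would decompose $\Delta_j R = \sum_{j'\ge j-N_0}\Delta_j(\Delta_{j'}u\,\widetilde\Delta_{j'}v)$, take an $L^\infty$ bound on one factor and $L^2$ on the other, and sum the resulting geometric series in $j'\ge j$, which converges precisely because $\sigma>0$. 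The homogeneous version $\dot H^\sigma$ is proved identically with homogeneous blocks, so both lines of the first display follow in one stroke.

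The refined estimate involving $\dot H^{d/2-1}$ is obtained by treating only $T_v u$ differently: instead of pulling $\|S_{j-1}v\|_{L^\infty}$ out, one writes $\|\Delta_j(S_{j-1}v\,\Delta_j u)\|_{L^2}\lesssim \|S_{j-1}v\|_{L^p}\|\Delta_j u\|_{L^q}$ and uses Bernstein together with the embedding $\dot H^{d/2-1}\hookrightarrow L^{2d/(d-2)}$ (or its $d=2$ substitute via Bernstein directly) to convert one derivative from $u$ onto $v$. This produces exactly the mixed factor $\|u\|_{\dot H^{d/2-1}}\|v\|_{\dot H^{\sigma+1}}$, while $T_u v$ and $R(u,v)$ still yield $\|v\|_{L^\infty}\|u\|_{\dot H^\sigma}$. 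For the two displayed estimates with $\sigma>d/2$, Bernstein gives the embedding $H^\sigma\hookrightarrow L^\infty$ at once, and then the $\sigma$-$\sigma$ product estimate is a direct consequence of the first bound. For the last mixed estimate, one applies the paraproduct bounds with regularity indices $\sigma_1$ on $v$ and $\sigma$ on $u$, using $u\in L^\infty$ to control $T_u v$, the embedding $H^{\sigma_1}\hookrightarrow$ a suitable space (when $\sigma_1>0$) or the hypothesis $\sigma+\sigma_1>0$ to close the remainder summation; the low-frequency block $\Delta_{-1}$ is handled separately since its $L^2$ norm is controlled by any $H^s$ norm.

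The main technical obstacle is the $\dot H^{d/2-1}$-refinement in low dimension $d=2$, where the embedding $\dot H^{0}\hookrightarrow L^\infty$ fails and one must use a Bernstein-type substitute (controlling $\|S_{j-1}v\|_{L^p}$ by $\|v\|_{\dot H^{\sigma+1}}$ with a carefully chosen $p<\infty$ and compensating on the other factor). The rest is standard bookkeeping within the paraproduct calculus.
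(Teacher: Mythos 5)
The paper does not actually prove this lemma: it is imported from \cite{B.C.D} with the remark that the proof is a standard application of Littlewood--Paley theory, so there is no in-paper argument to compare against. Your Bony-decomposition strategy is precisely the standard one, and your treatment of the first two tame estimates, of the remainder via $\sigma>0$, and of the two statements under $\sigma>\frac d2$ is sound.

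There is, however, a concrete error in your handling of the third estimate: you have attached the two bounds to the wrong paraproduct terms. With your convention $T_uv=\sum_jS_{j-1}u\,\Delta_jv$, the term $T_uv$ carries the \emph{high} frequencies of $v$, so pulling out $\|S_{j-1}u\|_{L^\infty}$ can only produce $\|u\|_{L^\infty}\|v\|_{\dot H^\sigma}$ --- and $\|u\|_{L^\infty}$ is neither on the right-hand side nor controlled by the hypotheses, since $u$ is only assumed to lie in $H^\sigma\cap H^{\frac d2-1}$. It is $T_uv$, not $T_vu$, that must be converted into the mixed factor: Bernstein gives $\|S_{j-1}u\|_{L^\infty}\lesssim\sum_{j'\le j-2}2^{j'd/2}\|\Delta_{j'}u\|_{L^2}=\sum_{j'\le j-2}2^{j'}\bigl(2^{j'(\frac d2-1)}\|\Delta_{j'}u\|_{L^2}\bigr)\lesssim 2^{j}\|u\|_{\dot H^{d/2-1}}$, whence $2^{j\sigma}\|S_{j-1}u\,\Delta_jv\|_{L^2}\lesssim\|u\|_{\dot H^{d/2-1}}\,2^{j(\sigma+1)}\|\Delta_jv\|_{L^2}$, while $T_vu$ and $R(u,v)$ yield $\|v\|_{L^\infty}\|u\|_{\dot H^\sigma}$ (the remainder using $\sigma>0$). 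Two smaller points: the embedding you invoke should be $\dot H^{d/2-1}\hookrightarrow L^{d}$, not $L^{2d/(d-2)}$ (these coincide only for $d=4$); and the $d=2$ ``obstacle'' you flag disappears once you use the Bernstein bound above, because the convergent geometric factor $2^{j'}$ in the sum over $j'\le j-2$ does all the work and no endpoint embedding of $\dot H^{0}$ into $L^\infty$ is ever needed. With these corrections the argument closes and coincides with the proof in the cited reference.
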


\begin{lemm}\label{le2.4}(see \cite{B.C.D})
Let $\sigma>0$ and $f$ be a smooth function such that $f(0)=0$. If $u\in H^\sigma(\mathbb{R}^d)$, then there exists a function $C=C(\sigma,f,d)$ such that
\begin{align*}
&||f(u)||_{H^\sigma(\mathbb{R}^d)}\leq C(||u||_{L^\infty(\mathbb{R}^d)})||u||_{H^\sigma(\mathbb{R}^d)}, \\
&||f(u)||_{\dot{H}^\sigma(\mathbb{R}^d)}\leq C(||u||_{L^\infty(\mathbb{R}^d)})||u||_{\dot{H}^\sigma(\mathbb{R}^d)}.
\end{align*}
\end{lemm}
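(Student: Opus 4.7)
The plan is to use a telescoping Bony-type decomposition that reduces the composition estimate to a frequency-wise product bound whose constants depend only on $\|u\|_{L^\infty}$. Set $S_j u := \sum_{j' < j}\Delta_{j'} u$ with the convention $S_{-1}u = 0$, so that $\Delta_j u = S_{j+1}u - S_j u$. Since $f(0)=0$, the telescoping identity reads
$$f(u)=f(\Delta_{-1}u)+\sum_{j\geq 0}\bigl(f(S_{j+1}u)-f(S_j u)\bigr)=f(\Delta_{-1}u)+\sum_{j\geq 0} m_j\,\Delta_j u,$$
where $m_j(x):=\int_0^1 f'\bigl(S_j u(x)+t\Delta_j u(x)\bigr)\,dt$. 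Smoothness of $f$ together with the pointwise bound $|S_j u+t\Delta_j u|\lesssim \|u\|_{L^\infty}$ yields $\|m_j\|_{L^\infty}\leq C(\|u\|_{L^\infty})$. Moreover, since $S_{j+1}u$ is spectrally localized in $\{|\xi|\lesssim 2^j\}$, Bernstein's inequality (Lemma \ref{bernstein}) gives $\|\partial^\alpha(S_j u+t\Delta_j u)\|_{L^\infty}\lesssim 2^{j|\alpha|}\|u\|_{L^\infty}$, and the Faà di Bruno chain rule produces $\|\partial^\alpha m_j\|_{L^\infty}\leq C_\alpha(\|u\|_{L^\infty})\,2^{j|\alpha|}$ for every multi-index $\alpha$.

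Next I would estimate $\|\Delta_k f(u)\|_{L^2}$ by splitting $\sum_j \Delta_k(m_j\Delta_j u)$ into two regimes. When $j\geq k-N_0$ (high-frequency relative to $k$), the crude bound $\|\Delta_k(m_j\Delta_j u)\|_{L^2}\lesssim \|m_j\|_{L^\infty}\|\Delta_j u\|_{L^2}\leq C(\|u\|_{L^\infty})\|\Delta_j u\|_{L^2}$ suffices. When $j<k-N_0$, the convolution kernel of $\Delta_k$ (for $k\geq 0$) has all moments vanishing; integrating by parts $N$ times against this kernel and exploiting the $2^{j|\alpha|}$ growth of derivatives of $m_j\Delta_j u$ produces
$$\|\Delta_k(m_j\Delta_j u)\|_{L^2}\lesssim C_N(\|u\|_{L^\infty})\,2^{-N(k-j)}\|\Delta_j u\|_{L^2}$$
for arbitrarily large $N$. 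Multiplying by $2^{k\sigma}$, a discrete Young-type convolution for $\ell^2$ sequences combines the two regimes into
$$\Bigl(\sum_{k\geq 0}2^{2k\sigma}\|\Delta_k f(u)\|_{L^2}^2\Bigr)^{1/2}\lesssim C(\|u\|_{L^\infty})\Bigl(\sum_{j\geq -1}2^{2j\sigma}\|\Delta_j u\|_{L^2}^2\Bigr)^{1/2}.$$
The low-frequency block $\Delta_{-1}f(u)$ is handled separately by the pointwise inequality $|f(u)|\leq C(\|u\|_{L^\infty})|u|$ (which follows from $f(0)=0$ and the fundamental theorem of calculus), giving $\|\Delta_{-1}f(u)\|_{L^2}\leq\|f(u)\|_{L^2}\leq C(\|u\|_{L^\infty})\|u\|_{L^2}\leq C(\|u\|_{L^\infty})\|u\|_{H^\sigma}$. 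Adding the two contributions proves the inhomogeneous bound; the homogeneous version follows identically after replacing $\Delta_j$ by $\dot\Delta_j$ and letting $j$ range over $\mathbb{Z}$ (the $\Delta_{-1}$ block is then absent, and convergence of the telescoping series at $j\to-\infty$ uses $S_j u\to 0$ in $L^\infty$).

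The technical heart of the argument is the off-diagonal regime $j<k-N_0$: a crude $L^\infty$ control of $m_j$ alone does \emph{not} suffice, because the resulting sum fails to converge with the $2^{k\sigma}$ weight when $\sigma>0$. What rescues the estimate is the \emph{quasi-spectral-localization} property $\|\partial^\alpha m_j\|_{L^\infty}\lesssim 2^{j|\alpha|}C(\|u\|_{L^\infty})$, which lets one transfer derivatives onto the kernel of $\Delta_k$ and recover the decay factor $2^{-N(k-j)}$ needed to close the discrete convolution. All remaining steps — Bernstein's inequality, the chain rule, and the discrete Young inequality — are routine consequences of the Littlewood–Paley machinery already recorded in Section 2.
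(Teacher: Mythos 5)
The paper offers no proof of this lemma: it is quoted directly from \cite{B.C.D}. Your argument is, in substance, the standard proof given there (Meyer's first linearization method): the telescoping identity $f(S_{j+1}u)-f(S_ju)=m_j\Delta_ju$ with $m_j=\int_0^1f'(S_ju+t\Delta_ju)\,\mathrm{d}t$, the quasi-spectral-localization bounds $\|\partial^\alpha m_j\|_{L^\infty}\leq C_\alpha(\|u\|_{L^\infty})2^{j|\alpha|}$ obtained from Bernstein's inequality and Fa\`a di Bruno, and the two-regime summation in which the off-diagonal part needs $N>\sigma$ and the near-diagonal part needs $\sigma>0$; this is correct and correctly identifies where each hypothesis enters. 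One loose end worth tightening: with your decomposition $f(u)=f(\Delta_{-1}u)+\sum_{j\geq0}m_j\Delta_ju$, the blocks $\Delta_kf(\Delta_{-1}u)$ for $k\geq0$ are covered by neither of your two regimes nor by your separate treatment of $\Delta_{-1}f(u)$, since $f(\Delta_{-1}u)$ is not spectrally supported in low frequencies. This is repaired at no cost by starting the telescoping at $j=-1$ (using $f(0)=0$), i.e.\ writing $f(\Delta_{-1}u)=m_{-1}\Delta_{-1}u$ with $m_{-1}=\int_0^1f'(t\Delta_{-1}u)\,\mathrm{d}t$, which obeys the same derivative bounds with $j=-1$ and hence falls into your off-diagonal estimate.
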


\begin{lemm}\label{le2.5}(see \cite{B.C.D})
Let $\sigma>\frac d2$ and $f$ be a smooth function such that $f'(0)=0$. If $u,\ v\in H^\sigma(\mathbb{R}^d)$, then there exists a function $C=C(\sigma,f,d)$ such that
\begin{equation*}
||f(u)-f(v)||_{H^\sigma(\mathbb{R}^d)}\leq C(||u||_{L^\infty(\mathbb{R}^d)},||v||_{L^\infty(\mathbb{R}^d)})||u-v||_{H^\sigma(\mathbb{R}^d)}
(||u||_{H^\sigma(\mathbb{R}^d)}+||v||_{H^\sigma(\mathbb{R}^d)}).
\end{equation*}
\end{lemm}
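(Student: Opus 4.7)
The plan is to prove Theorem~\ref{th1.1} via the three-step program announced by the authors: derive uniform $H^s$ a priori estimates that close under a smallness assumption, build approximate solutions by Friedrichs' spectral truncation, then pass to the limit by compactness and establish uniqueness separately.

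The heart of the argument is the a priori estimate, and its main obstacle is that the continuity equation is purely hyperbolic, so a naive energy method yields $\|\rho\|_{L^\infty_tH^s}\lesssim\|\rho_0\|_{H^s}\exp\bigl(C\!\int_0^t\!\|\nabla u\|_{L^\infty}\,d\tau\bigr)$ but \emph{no} $L^2_t$-in-time dissipative information on $\rho$. I would apply $\Delta_j$ to each equation of \eqref{1.2}, pair with $\Delta_j\rho,\Delta_j u,\Delta_j\theta$, and then augment the natural energy by a cross term $\beta\sum_j 2^{2js}(\Delta_j u,\nabla\Delta_j\rho)$. Differentiating this cross term in time and using the momentum equation produces the coercive contribution $-\beta\|\nabla\Delta_j\rho\|_{L^2}^2$ coming from the $\nabla\rho$ force, while using the continuity equation produces $\beta\|\mathrm{div}\,\Delta_ju\|_{L^2}^2$ plus harmless nonlinear remainders. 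For $\beta$ small enough the cross term is dominated by the standard energy, and $\beta\|\mathrm{div}\,u\|_{L^2}^2$ is absorbed by the Lam\'e dissipation $\mu(1)\bigl(\|\nabla\Delta_ju\|_{L^2}^2+\|\mathrm{div}\,\Delta_ju\|_{L^2}^2\bigr)$. Summing $2^{2js}$ over $j\ge -1$ and invoking Lemmas~\ref{le2.3}--\ref{le2.5} to handle $F_1,F_2,\mathrm{div}(\rho u)$ as products/compositions in $H^s$ (using the algebra property for $s>d/2$), I arrive at the key inequality
\begin{equation*}
\frac{d}{dt}E(t)^2 + c\bigl(\|\nabla\rho\|_{H^{s-1}}^2+\|\nabla u\|_{H^s}^2+\|\nabla\theta\|_{H^s}^2\bigr)\le C\,E(t)\bigl(\|\nabla\rho\|_{H^{s-1}}^2+\|\nabla u\|_{H^s}^2+\|\nabla\theta\|_{H^s}^2\bigr),
\end{equation*}
where $E(t)^2\approx\|\rho\|_{H^s}^2+\|u\|_{H^s}^2+\|\theta\|_{H^s}^2$. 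A standard bootstrap then shows that if $E(0)^2\le\eta$ is sufficiently small, $E(t)\le 2E(0)$ for all $t\ge 0$ and $(\nabla\rho,\nabla u,\nabla\theta)\in L^2(\mathbb R^+;H^{s-1}\times H^s\times H^s)$.

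For existence I would follow the Friedrichs scheme: set $J_n f:=\mathcal F^{-1}(\mathbf 1_{\{1/n\le|\xi|\le n\}}\widehat f)$, replace each nonlinear product in \eqref{1.2} by its $J_n$-truncation, and solve the resulting ODE on the Hilbert space $L^2_n:=J_nL^2(\mathbb R^d)$ by Cauchy--Lipschitz. Since $J_n$ commutes with Fourier multipliers, the a priori estimate carries through with the same constants, giving global-in-time approximate solutions $(\rho^n,u^n,\theta^n)$ uniformly bounded in $L^\infty(\mathbb R^+;H^s)$ with $(\nabla\rho^n,\nabla u^n,\nabla\theta^n)$ uniformly bounded in $L^2(\mathbb R^+;H^{s-1}\times H^s\times H^s)$. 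Reading $\partial_t(\rho^n,u^n,\theta^n)$ off \eqref{1.2} gives uniform bounds in a negative-index Sobolev space; combined with the uniform spatial regularity, Aubin--Lions on $[0,T]\times B_R$ for each $T,R$ extracts a subsequence converging strongly in $C([0,T];H^{s-\varepsilon}_{\mathrm{loc}})$, which suffices to pass to the limit in every nonlinear term and identify the limit as a solution of \eqref{1.2}. Weak continuity in $H^s$ plus the energy inequality upgrades the time regularity to $C(\mathbb R^+;H^s)$.

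Uniqueness is handled separately at the $L^2$ level: for two solutions $(\rho_i,u_i,\theta_i)$ sharing the same data, the difference $(\delta\rho,\delta u,\delta\theta)$ solves a linear system whose coefficients lie in $L^\infty_{t,x}$ (by the embedding $H^s\hookrightarrow L^\infty$). An $L^2$-energy estimate, augmented once more by the cross term $\beta(\delta u,\nabla\delta\rho)$ to recover dissipation on $\delta\rho$, yields a Gronwall inequality forcing $(\delta\rho,\delta u,\delta\theta)\equiv 0$. The delicate technical points throughout are (a) the quadratic gradient term $\kappa'(\theta)|\nabla\theta|^2/(\rho+1)$ in $F_2$, which is the worst nonlinearity and requires the embedding $H^s\hookrightarrow L^\infty$ together with the $L^2_tH^{s+1}$ control of $\theta$ to be tamed via Lemma~\ref{le2.3}; and (b) the fact that $F_1,F_2$ are built from compositions $f(\rho),g(\theta)$ with $f(0)=g(0)=0$, so the composition estimate of Lemma~\ref{le2.4} can only be applied as long as $\|\rho\|_{L^\infty}$ stays bounded, which is secured precisely by the smallness of $\eta$.
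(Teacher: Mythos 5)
Your proposal does not address the statement at hand. The statement is Lemma~\ref{le2.5}, a composition--difference estimate in Sobolev spaces: for smooth $f$ with $f'(0)=0$ and $u,v\in H^\sigma(\mathbb{R}^d)$ with $\sigma>\frac d2$, one must bound $\|f(u)-f(v)\|_{H^\sigma}$ by $\|u-v\|_{H^\sigma}(\|u\|_{H^\sigma}+\|v\|_{H^\sigma})$ up to a constant depending on the $L^\infty$ norms. What you have written instead is a strategy outline for Theorem~\ref{th1.1} (the global well-posedness result): a priori estimates with a cross term, Friedrichs truncation, Aubin--Lions compactness, and an $L^2$ uniqueness argument. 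None of that constitutes a proof of the lemma; indeed your sketch \emph{invokes} Lemmas~\ref{le2.3}--\ref{le2.5} as black boxes, so it presupposes the very statement you were asked to prove. In the paper this lemma carries no proof at all --- it is quoted from the reference \cite{B.C.D} --- but that does not make an unrelated argument an acceptable substitute.

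For the record, the standard proof is short. Write the first-order Taylor formula
\begin{equation*}
f(u)-f(v)=(u-v)\int_0^1 f'\bigl(v+\tau(u-v)\bigr)\,\mathrm{d}\tau ,
\end{equation*}
and set $w_\tau=v+\tau(u-v)$. Since $\sigma>\frac d2$, $H^\sigma$ is an algebra and embeds into $L^\infty$; apply the product law of Lemma~\ref{le2.3} to the product of $u-v$ with $g_\tau:=f'(w_\tau)$, and the composition estimate of Lemma~\ref{le2.4} to $g_\tau$, which is legitimate because $f'(0)=0$. This gives $\|g_\tau\|_{H^\sigma}\leq C(\|w_\tau\|_{L^\infty})\|w_\tau\|_{H^\sigma}\lesssim C(\|u\|_{L^\infty},\|v\|_{L^\infty})(\|u\|_{H^\sigma}+\|v\|_{H^\sigma})$ and likewise $\|g_\tau\|_{L^\infty}\lesssim C(\|u\|_{L^\infty},\|v\|_{L^\infty})(\|u\|_{H^\sigma}+\|v\|_{H^\sigma})$ by the embedding; integrating in $\tau$ and combining the two contributions of the product law (each controlled by $\|u-v\|_{H^\sigma}$ times one of these factors, using $\|u-v\|_{L^\infty}\lesssim\|u-v\|_{H^\sigma}$) yields exactly the claimed inequality. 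You should supply an argument of this type, not a roadmap for the main theorem.
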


\begin{lemm}\label{le2.6}(see \cite{B.C.D})
Let $\sigma>\frac d2-1$. There exists a positive sequence $\{c_{j}\}_{j\geq -1}$ satisfying $||c_{j}||_{\ell^2}=1$ such that
\begin{equation*}
||[u\cdot \nabla,\Delta_jf]||_{L^2(\mathbb{R}^d)}\lesssim c_{j}2^{-j(\sigma+1)}||\nabla u||_{H^{\sigma+1}(\mathbb{R}^d)}||f||_{H^{\sigma+1}(\mathbb{R}^d)} \quad \mathrm{for} \quad  j\geq -1.
\end{equation*}
\end{lemm}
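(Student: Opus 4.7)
The plan is to prove this commutator estimate via Bony's paraproduct decomposition, which is the standard route for such bounds (cf.\ Chapter~2 of \cite{B.C.D}). I read the left-hand side as the commutator $[\Delta_j,\,u\cdot\nabla]f = \Delta_j(u\cdot\nabla f) - u\cdot\nabla\Delta_j f$ applied to $f$. First I would decompose (with summation on $k$ understood)
\begin{align*}
u^k\partial_k f &= T_{u^k}\partial_k f + T_{\partial_k f}u^k + R(u^k,\partial_k f),\\
u^k\partial_k \Delta_j f &= T_{u^k}\partial_k\Delta_j f + T_{\partial_k\Delta_j f}u^k + R(u^k,\partial_k\Delta_j f),
\end{align*}
apply $\Delta_j$ to the first line, subtract the second, and arrive at the five-term identity
\begin{align*}
[\Delta_j,\,u\cdot\nabla]f = [\Delta_j,T_{u^k}]\partial_k f + \Delta_j T_{\partial_k f}u^k - T_{\partial_k\Delta_j f}u^k + \Delta_j R(u^k,\partial_k f) - R(u^k,\partial_k\Delta_j f).
\end{align*}

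For the genuine commutator term $[\Delta_j,T_{u^k}]\partial_k f$ I would use the kernel representation $\Delta_j g = h_j\ast g$ with $h_j(x) = 2^{jd}h(2^j x)$, $h=\mathcal{F}^{-1}\varphi$, and restrict the sum defining $T_{u^k}\partial_k f$ to indices $|j'-j|\le 4$ by Littlewood-Paley almost-orthogonality. For each such piece,
\begin{align*}
[\Delta_j, S_{j'-1}u^k]\Delta_{j'}\partial_k f(x) = \int h_j(x-y)\bigl(S_{j'-1}u^k(y)-S_{j'-1}u^k(x)\bigr)\Delta_{j'}\partial_k f(y)\,dy,
\end{align*}
and a first-order Taylor expansion of $S_{j'-1}u^k$ together with the rapid decay of $h$ yields the prefactor $2^{-j}\|\nabla S_{j'-1}u\|_{L^\infty}$. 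Combined with Bernstein's inequality $\|\Delta_{j'}\partial_k f\|_{L^2}\le C2^{j'}\|\Delta_{j'}f\|_{L^2}$ and the embedding $H^{\sigma+1}\hookrightarrow L^\infty$ (which holds because $\sigma+1>d/2$ under $\sigma>d/2-1$), summing on $|j'-j|\le 4$ produces a bound of the form $c_j\,2^{-j(\sigma+1)}\|\nabla u\|_{H^{\sigma+1}}\|f\|_{H^{\sigma+1}}$ with $\{c_j\}\in\ell^2$ normalized to unit norm.

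For the remaining four terms I would invoke the standard paraproduct and remainder estimates: the frequency support of $T_g h$ lies in an annulus $|\xi|\sim 2^{j'}$ tied to the argument $h$, while that of $R(g,h)$ is a ball of radius $\sim 2^{j'}$ tied to the highest-frequency pair. Bernstein plus Cauchy-Schwarz then convert each summand into an entry of a natural $\ell^2$ sequence against the square-summable sequences $(2^{j'(\sigma+1)}\|\Delta_{j'}\nabla u\|_{L^2})$ and $(2^{j'(\sigma+1)}\|\Delta_{j'}f\|_{L^2})$. The hypothesis $\sigma>d/2-1$ (so $\sigma+1>d/2$) is exactly what is required for the remainder $R(u^k,\partial_k f)$ to make sense as a distribution and to yield a summable upper bound after the shift by one derivative. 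The low-frequency block $j=-1$ is handled separately by a direct $L^\infty\times L^2$ H\"older estimate, which is unproblematic once the embedding is in hand.

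The main obstacle is the bookkeeping in the commutator piece: extracting the gain $2^{-j}$ from the Taylor expansion of the kernel, trading one derivative on $f$ for a factor $2^{j'}$ via Bernstein, and reassembling the exponents so that the advertised decay $2^{-j(\sigma+1)}$ appears with a genuine $\ell^2$ sequence $\{c_j\}_{j\ge -1}$, while the norm on $u$ stays at $\|\nabla u\|_{H^{\sigma+1}}$ and does not creep up to $\|\nabla^2 u\|_{H^\sigma}$. Once the commutator piece is handled with the correct weights, the remaining paraproduct/remainder terms are routine and combine to give the stated inequality uniformly in $j\ge -1$.
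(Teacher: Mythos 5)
The paper does not actually prove Lemma~\ref{le2.6}: it is quoted verbatim from \cite{B.C.D} with no argument supplied, so there is no in-paper proof to compare yours against. What you have written is precisely the standard proof from that reference (the commutator estimates of Chapter~2): Bony's decomposition of both $u\cdot\nabla f$ and $u\cdot\nabla\Delta_j f$ yielding the five-term identity, the kernel representation $\Delta_j=h_j\ast\cdot$ with a first-order Taylor expansion of $S_{j'-1}u$ to extract the factor $2^{-j}\|\nabla S_{j'-1}u\|_{L^\infty}$ from the genuine commutator piece, and Bernstein plus almost-orthogonality for the paraproduct and remainder terms. As a sketch it is sound, and you correctly identify where the hypothesis $\sigma>\tfrac d2-1$ enters and why only $\nabla u$ (not $u$ itself) should appear in the final bound.

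Two places where the sketch is thinner than the actual argument and deserve care if you write it out. First, in the term $\Delta_j T_{\partial_k f}u^k$ you cannot bound $\|S_{j'-1}\partial_k f\|_{L^\infty}$ by $\|\nabla f\|_{L^\infty}$: the hypothesis only gives $\nabla f\in H^{\sigma}$ with $\sigma$ possibly below $\tfrac d2$, so one must instead pay a Bernstein factor $2^{j'(\frac d2-\sigma)}$ on the low-frequency sum and use $\sigma>\tfrac d2-1$ \emph{again} to make the resulting series over $j'\gtrsim j$ converge to $c_j2^{-j(\sigma+1)}$; your phrase ``Bernstein plus Cauchy--Schwarz'' points in the right direction but this is where the exponent bookkeeping is genuinely delicate. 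Second, the claim that $\sigma>\tfrac d2-1$ is ``exactly what is required for the remainder $R(u^k,\partial_k f)$ to make sense as a distribution'' is not accurate: the remainder converges under the much weaker condition that the sum of the regularity indices is positive; the true role of $\sigma>\tfrac d2-1$ is that $\sigma+1>\tfrac d2$, which both gives the embedding $H^{\sigma+1}\hookrightarrow L^{\infty}$ and ensures the remainder lands at regularity level $\sigma+1$ after $\Delta_j$ is applied. Neither point is a fatal gap, but both are exactly the steps one would check when reading a full write-up.
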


\begin{lemm}\label{le2.7}
Let $\sigma>\frac d2-1$. Suppose that $\rho\in H^\sigma(\mathbb{R}^d)$ and $u\in H^{\sigma+1}(\mathbb{R}^d)$. Then we have for $j\geq 0$,
\begin{align*}
\Big|\int_{\mathbb{R}^d}\Delta_{j}\mathrm{div}(\rho u)\Delta(\Delta_{j}\rho)\mathrm{d} x\Big|\lesssim c^2_{j}2^{-2j\sigma}||\rho||_{H^{\sigma+1}(\mathbb{R}^d)}(||\nabla \rho||^2_{H^\sigma(\mathbb{R}^d)}+||\nabla u||^2_{H^{\sigma+1}(\mathbb{R}^d)}),
\end{align*}
where $\{c_{j}\}_{j\geq 0}$ satisfying $||c_{j}||_{\ell^2}\leq1$.
\end{lemm}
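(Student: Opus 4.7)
}

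The plan is to decompose $\mathrm{div}(\rho u)=u\cdot\nabla\rho+\rho\,\mathrm{div} u$ and split the integral as
\begin{align*}
\int_{\mathbb R^d}\Delta_j\mathrm{div}(\rho u)\,\Delta(\Delta_j\rho)\,dx
&=\underbrace{\int u\cdot\nabla(\Delta_j\rho)\,\Delta(\Delta_j\rho)\,dx}_{J_1}
+\underbrace{\int[\Delta_j,u\cdot\nabla]\rho\,\Delta(\Delta_j\rho)\,dx}_{J_2}\\
&\quad+\underbrace{\int\Delta_j(\rho\,\mathrm{div} u)\,\Delta(\Delta_j\rho)\,dx}_{J_3},
\end{align*}
and then estimate each $J_k$ separately so that the right-hand side comes out as a product $\|\rho\|_{H^{\sigma+1}}\|\nabla u\|_{H^{\sigma+1}}\|\nabla\rho\|_{H^\sigma}$ (times $c_j^2 2^{-2j\sigma}$), from which the stated bound follows by the elementary inequality $ab\le\tfrac12(a^2+b^2)$.

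For $J_1$, writing $w=\Delta_j\rho$ and integrating by parts twice I expect
$$\int u_i\partial_i w\,\partial_k\partial_k w\,dx=-\int(\partial_k u_i)(\partial_i w)(\partial_k w)\,dx+\tfrac12\int(\mathrm{div}\,u)|\nabla w|^2\,dx,$$
so that $|J_1|\lesssim\|\nabla u\|_{L^\infty}\|\nabla\Delta_j\rho\|_{L^2}^2$. Bernstein's lemma gives $\|\nabla\Delta_j\rho\|_{L^2}^2\lesssim c_j^2 2^{-2j\sigma}\|\nabla\rho\|_{H^\sigma}^2$ with $\|c_j\|_{\ell^2}\le1$ (define $c_j:=2^{j(\sigma+1)}\|\Delta_j\rho\|_{L^2}/\|\nabla\rho\|_{H^\sigma}$), and the embedding $H^{\sigma+1}\hookrightarrow L^\infty$ (valid since $\sigma+1>d/2$) gives $\|\nabla u\|_{L^\infty}\lesssim\|\nabla u\|_{H^{\sigma+1}}$. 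Finally using $\|\nabla\rho\|_{H^\sigma}\lesssim\|\rho\|_{H^{\sigma+1}}$ produces the desired triple-product form. For $J_2$, Cauchy--Schwarz together with Lemma \ref{le2.6} yields
$$|J_2|\le\|[\Delta_j,u\cdot\nabla]\rho\|_{L^2}\,\|\Delta\Delta_j\rho\|_{L^2}
\lesssim c_j 2^{-j(\sigma+1)}\|\nabla u\|_{H^{\sigma+1}}\|\rho\|_{H^{\sigma+1}}\cdot c_j 2^{j(1-\sigma)}\|\nabla\rho\|_{H^\sigma},$$
and the exponents combine to give exactly the factor $c_j^2\,2^{-2j\sigma}$. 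For $J_3$, Cauchy--Schwarz together with the product law in Lemma \ref{le2.3} (legitimate because $\sigma+1>d/2$) gives $\|\Delta_j(\rho\,\mathrm{div} u)\|_{L^2}\lesssim \tilde c_j 2^{-j(\sigma+1)}\|\rho\|_{H^{\sigma+1}}\|\nabla u\|_{H^{\sigma+1}}$, and pairing with $\|\Delta\Delta_j\rho\|_{L^2}\lesssim c_j 2^{j(1-\sigma)}\|\nabla\rho\|_{H^\sigma}$ again yields the correct $2^{-2j\sigma}$ decay. The square-summable sequence in the conclusion is obtained by replacing $c_j\tilde c_j$ by $\tfrac12(c_j^2+\tilde c_j^2)$.

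Collecting the three bounds and applying $ab\le\tfrac12(a^2+b^2)$ to the product $\|\nabla u\|_{H^{\sigma+1}}\|\nabla\rho\|_{H^\sigma}$ concludes the proof. The main obstacle, in my view, is the treatment of $J_1$: a naive Cauchy--Schwarz using $\|\Delta\Delta_j\rho\|_{L^2}\lesssim 2^{2j}\|\Delta_j\rho\|_{L^2}$ would cost two derivatives on $\rho$ and produce a divergent factor $2^{2j(1-\sigma)}$ that cannot be reabsorbed into $c_j^2 2^{-2j\sigma}$. It is therefore essential to integrate by parts in $J_1$ to redistribute one derivative from $\Delta\Delta_j\rho$ onto $u$, so that only $\|\nabla u\|_{L^\infty}$ appears; the embedding $H^{\sigma+1}\hookrightarrow L^\infty$ is precisely where the assumption $\sigma>d/2-1$ is used.
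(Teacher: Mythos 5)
Your proposal is correct and follows essentially the same route as the paper: the same splitting of $\mathrm{div}(\rho u)$ into the commutator piece (handled by Lemma \ref{le2.6}), the transport piece $u\cdot\nabla\Delta_j\rho$ (handled by the same integration by parts that moves one derivative onto $u$, leaving $\int \mathrm{div}\,u\,|\nabla\Delta_j\rho|^2$ and $\int(\nabla u)^T:(\nabla\Delta_j\rho\otimes\nabla\Delta_j\rho)$), and the product piece $\rho\,\mathrm{div}\,u$ (handled by Lemma \ref{le2.3}). Your identification of the integration by parts in $J_1$ as the essential step, rather than a naive Cauchy--Schwarz, matches exactly what the paper does.
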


\begin{proof}
It is easy to get that
\begin{align*}
\int_{\mathbb{R}^d}\Delta_{j}\mathrm{div}(\rho u)\Delta(\Delta_{j}\rho)\mathrm{d} x=\int_{\mathbb{R}^d}\Delta_{j}(u\cdot\nabla \rho )\Delta(\Delta_{j}\rho)\mathrm{d} x+\int_{\mathbb{R}^d}\Delta_{j}(\rho \mathrm{div} u)\Delta(\Delta_{j}\rho)\mathrm{d} x.
\end{align*}
On the one hand, by Lemma \ref{le2.6}, we have
\begin{align*}
&\Big|\int_{\mathbb{R}^d}\Delta_{j}(u\cdot\nabla \rho )\Delta(\Delta_{j}\rho)\mathrm{d} x\Big|\\
\lesssim& \Big|\int_{\mathbb{R}^d}[u\cdot \nabla,\Delta_{j}]\rho )\Delta(\Delta_{j}\rho)\mathrm{d} x\Big|+\Big|\int_{\mathbb{R}^d}u\cdot\nabla \Delta_{j}\rho \Delta(\Delta_{j}\rho)\mathrm{d} x\Big|
\\ \lesssim& c^2_j2^{-2j\sigma}||\rho||_{H^{\sigma+1}(\mathbb{R}^d)}||\nabla \rho||_{H^\sigma(\mathbb{R}^d)}||\nabla u||_{H^{\sigma+1}(\mathbb{R}^d)}
\\& \quad \ +\Big|\int_{\mathbb{R}^d}\mathrm{div} u|\nabla \Delta_j\rho|^2\mathrm{d} x\Big|+\Big|\int_{\mathbb{R}^d}(\nabla u)^T:(\nabla \Delta_j\rho\otimes \nabla \Delta_j\rho)\mathrm{d} x\Big|
\\\lesssim& c^2_{j}2^{-2j\sigma}||\rho||_{H^{\sigma+1}(\mathbb{R}^d)}(||\nabla \rho||^2_{H^\sigma(\mathbb{R}^d)}+||\nabla u||^2_{H^{\sigma+1}(\mathbb{R}^d)}).
\end{align*}
On the other hand, it follows by Lemma \ref{le2.3} that
\begin{align*}
\Big|\int_{\mathbb{R}^d}\Delta_{j}(\rho \mathrm{div} u)\Delta(\Delta_j\rho)\mathrm{d} x\Big|&\lesssim c^2_j2^{-2j\sigma}||\nabla \rho||_{H^\sigma(\mathbb{R}^d)}||\rho||_{H^{\sigma+1}(\mathbb{R}^d)}||\nabla u||_{H^{\sigma+1}(\mathbb{R}^d)}
\\&\lesssim c^2_j2^{-2j\sigma}||\rho||_{H^{\sigma+1}(\mathbb{R}^d)}(||\nabla\rho||^2_{H^{\sigma}(\mathbb{R}^d)}+||\nabla u||^2_{H^{\sigma+1}(\mathbb{R}^d)}).
\end{align*}
Therefore, combining this two inequalities, then we complete the proof of the lemma.
\end{proof}

\section{A priori estimates}

\quad \quad In this section, we need to establish a losing energy estimate to the system (\ref{1.2}) which is motivated by \cite{W.X}. Firstly, we transform (\ref{1.2}) to the following linear system:
\begin{equation}\label{3.1}\begin{cases}
\partial_t\rho+\mathrm{div} u=G_1,\\
\partial_t u-\mu(1)\Delta u-\mu(1)\nabla\mathrm{div} u+\nabla \rho+\nabla \theta=G_2,\\
\partial_t\theta-\kappa\Delta \theta=G_3,\\
(\rho,u,\theta)|_{t=0}=(\rho_0,u_0,\theta_0),
\end{cases}\end{equation}
where
\begin{equation*}\begin{cases}
G_1=-\mathrm{div}(\rho u),\\
G_2=(\frac{\mu(\rho+1)}{\rho+1}-\mu(1))(\Delta u+\nabla \mathrm{div} u)\big)+\frac{\nabla \rho}{\rho+1}\theta
\\ \quad \quad \quad -u\cdot\nabla u+\frac{\mu'(\rho+1)}{\rho+1}(\nabla u+(\nabla u)^T)\cdot\nabla \rho-(P'_e(\rho+1)-1)\nabla \rho,\\
G_3=(\frac{\kappa(\theta)}{\rho+1}-\kappa)\Delta \theta+\frac{\mu(\rho+1)}{\rho+1}(\nabla u+(\nabla u)^T):\nabla u-\theta\mathrm{div} u-u\cdot \nabla \theta+\frac{\kappa'(\theta)}{\rho+1}|\nabla \theta|^2.
\end{cases}\end{equation*}
 In order to simplify the notation, we define the functional set $(\rho,u,\theta)\in E(T)$ if
$$(\rho,u,\theta)\in \mathcal{C}([0,T];H^{s}\times H^{s}\times H^{s}), \quad (\nabla \rho,\nabla u,\nabla \theta)\in L^2([0,T];H^{s-1} \times H^{s}\times H^{s}).$$
We also define norm as
\begin{equation*}
||(\rho,u,\theta)||_{E(0)}=||\rho_0||^2_{H^{s}}+||u_0||^2_{H^{s}}+||\theta_0||^2_{H^{s}},
\end{equation*}
\begin{align*}
||(\rho,u,\theta)||_{E(T)}&=||\rho||^2_{L^\infty_T(H^{s})}+||u||^2_{L^\infty_T(H^{s})}+||\theta||^2_{L^\infty_T(H^{s})}
\\& \quad \ +||\nabla \rho||^2_{L^2_T(H^{s-1})}+||\nabla u||^2_{L^2_T(H^{s})}+||\nabla \theta||^2_{L^2_T(H^{s})}.
\end{align*}
Then, we have the following main proposition.
\begin{prop}\label{th3.1}
Let $s>\frac d2$, $d=2,3,4$ and $T>0$. Let $(\rho,u,\theta)\in E(T)$ be the solution of the Cauchy problem $(\ref{3.1})$ with initial data $(\rho_0,u_0,\theta_0)$. Suppose that $||\rho(t,\cdot)||_{L^\infty}\leq \frac12$ and $||\theta(t,\cdot)||_{L^\infty}\leq \frac12$, then there exists a positive constant $\mathfrak{C}$ depending on $s$, $\mu(1)$, $\kappa$ and the smooth functions $\mu(x)$ and $\kappa(x)$ such that
\begin{align*}
||(\rho,u,\theta)||_{E(T)}\lesssim \mathfrak{C}||(\rho,u,\theta)||_{E(0)}+\mathfrak{C}||(\rho,u,\theta)||^2_{E(T)}+\mathfrak{C}||(\rho,u,\theta)||^3_{E(T)}.
\end{align*}
\end{prop}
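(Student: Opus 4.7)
The plan is to localize $(\ref{3.1})$ via Littlewood-Paley projectors and to supplement the standard $L^2$ energy by a cross-term that manufactures dissipation for $\nabla\rho$; the nonlinear source terms are then closed via the product and composition lemmas from Section 2. Concretely, I apply $\Delta_j$ ($j\geq -1$) to each equation and denote $\rho_j:=\Delta_j\rho$, $u_j:=\Delta_ju$, $\theta_j:=\Delta_j\theta$. Taking the $L^2$ inner product of the three localized equations with $\rho_j,u_j,\theta_j$, using the cancellation $(\rho_j,\mathrm{div}\,u_j)+(\nabla\rho_j,u_j)=0$, and handling $(\nabla\theta_j,u_j)$ by Young's inequality, one obtains
\begin{align*}
\tfrac12\tfrac{d}{dt}\bigl(\|\rho_j\|^2+\|u_j\|^2+\|\theta_j\|^2\bigr)+c\bigl(\|\nabla u_j\|^2+\|\nabla\theta_j\|^2\bigr)\leq (G_1^j,\rho_j)+(G_2^j,u_j)+(G_3^j,\theta_j).
\end{align*}
Multiplying by $2^{2js}$ and summing over $j$ controls $\|(\rho,u,\theta)\|_{L^\infty_TH^s}$ and the parabolic pieces $\|\nabla u\|_{L^2_TH^s},\|\nabla\theta\|_{L^2_TH^s}$, but gives no dissipation for $\rho$.

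To recover $\|\nabla\rho\|_{L^2_TH^{s-1}}$ I use a cross-term. Substituting $\mu(1)\nabla\mathrm{div}\,u_j=\partial_tu_j-\mu(1)\Delta u_j+\nabla\rho_j+\nabla\theta_j-G_2^j$ from the momentum equation into $(\nabla\mathrm{div}\,u_j,\nabla\rho_j)$, rewriting $(\partial_tu_j,\nabla\rho_j)=\tfrac{d}{dt}(u_j,\nabla\rho_j)+\|\mathrm{div}\,u_j\|^2-(\mathrm{div}\,u_j,G_1^j)$ via the continuity equation, and using the direct identity $\tfrac12\tfrac{d}{dt}\|\nabla\rho_j\|^2+(\nabla\mathrm{div}\,u_j,\nabla\rho_j)=-(G_1^j,\Delta\rho_j)$, yields
\begin{align*}
\tfrac{\mu(1)}{2}\tfrac{d}{dt}\|\nabla\rho_j\|^2+\tfrac{d}{dt}(u_j,\nabla\rho_j)+\|\nabla\rho_j\|^2\lesssim \|\nabla u_j\|^2+\|\Delta u_j\|^2+\|\nabla\theta_j\|^2+\mathrm{NL}_j,
\end{align*}
where $\mathrm{NL}_j$ collects $(G_1^j,\Delta\rho_j)$ -- exactly the object treated by Lemma \ref{le2.7} -- together with the simpler pairings $(G_2^j,\nabla\rho_j)$ and $(\mathrm{div}\,u_j,G_1^j)$. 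Weighting by $2^{2j(s-1)}$ and summing, Lemma \ref{bernstein} turns $\|\Delta u_j\|^2$ into a piece dominated by $\|\nabla u\|_{H^s}^2$.

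Combining the two families of estimates, I define
\begin{align*}
\mathcal{E}(t):=\|\rho(t)\|_{H^s}^2+\|u(t)\|_{H^s}^2+\|\theta(t)\|_{H^s}^2+\eta\sum_{j\geq -1}2^{2j(s-1)}\Bigl[(u_j,\nabla\rho_j)+\tfrac{\mu(1)}{2}\|\nabla\rho_j\|^2\Bigr]
\end{align*}
for a small $\eta>0$ depending only on $\mu(1),\kappa$; Cauchy-Schwarz yields $\mathcal{E}(t)\approx \|\rho\|_{H^s}^2+\|u\|_{H^s}^2+\|\theta\|_{H^s}^2$, and the combined differential inequality reads $\tfrac{d}{dt}\mathcal{E}+c(\|\nabla\rho\|_{H^{s-1}}^2+\|\nabla u\|_{H^s}^2+\|\nabla\theta\|_{H^s}^2)\lesssim \textrm{Nonlinear}$. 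Every summand of $G_1,G_2,G_3$ factorizes as a $\rho$- or $\theta$-dependent function (bounded in $H^s$ by Lemma \ref{le2.4} thanks to the $L^\infty$-smallness) times a first- or second-order derivative of $(\rho,u,\theta)$; Lemma \ref{le2.3} distributes the derivatives appropriately. Writing $\mathcal{X}:=\|(\rho,u,\theta)\|_{E(T)}$, typical contributions are bounded by expressions of orders $\mathcal{X}^{3/2},\mathcal{X}^2,\mathcal{X}^{5/2},\mathcal{X}^3$; the half-integer orders are absorbed through Young's inequalities $\mathcal{X}^{3/2}\leq\tfrac14\mathcal{X}+C\mathcal{X}^2$ and $\mathcal{X}^{5/2}\leq\tfrac12\mathcal{X}^2+\tfrac12\mathcal{X}^3$, and the leading $\mathcal{X}$ is reabsorbed into the left-hand side after time integration, giving the announced estimate.

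The principal difficulty is Step 2, the so-called losing estimate: one has to exploit the pressure coupling $+\nabla\rho$ in the momentum equation to produce coercivity on $\|\nabla\rho_j\|^2$, at the price of one extra derivative on $u$, so that $\nabla\rho$ ends up controlled in $H^{s-1}$ rather than $H^s$. The specific weight $2^{2j(s-1)}$ and a sufficiently small coupling $\eta$ are essential so that $\mathcal{E}$ stays positive and norm-equivalent to the usual $H^s$ energy, and Lemma \ref{le2.7} is tailor-made for the $(\Delta_j\mathrm{div}(\rho u),\Delta\rho_j)$ term produced at this regularity.
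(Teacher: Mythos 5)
Your proposal follows essentially the same route as the paper: a Littlewood--Paley-localized energy augmented by the cross term $(u_j,\nabla\rho_j)$ weighted at the lower level $2^{2j(s-1)}$ so that the pressure coupling yields the dissipation $\|\nabla\rho\|_{L^2_TH^{s-1}}$, with Lemma \ref{le2.7} handling the critical pairing $(\Delta_j\mathrm{div}(\rho u),\Delta\rho_j)$; the paper merely packages the identical energy by testing the blocks against $\rho_j-\Delta\rho_j-\lambda\,\mathrm{div}\,u_j$, $u_j-\Delta u_j+\lambda\nabla\rho_j$, $\theta_j-\Delta\theta_j$ and treating the low frequency $j=-1$ by a separate global $H^1$ estimate. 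Two small points to watch: the top-order pairing $\sum_j 2^{2js}(\Delta_jG_1,\rho_j)$ also requires the commutator mechanism of Lemma \ref{le2.6} together with integrating by parts in $(u\cdot\nabla\rho_j,\rho_j)$ (Lemma \ref{le2.3} alone would demand $\rho u\in H^{s+1}$, which is not available), and your substitution identity carries a harmless sign slip in the $\pm\|\mathrm{div}\,u_j\|^2$ and $\pm(\mathrm{div}\,u_j,G_1^j)$ terms.
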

\begin{proof}
We firstly apply the operator $\Delta_j$ to $(\ref{3.1})$  to get
\begin{equation}\label{3.2}\begin{cases}
\partial_t\rho_j+\mathrm{div} u_j=\Delta_jG_1,\\
\partial_t u_j-\mu(1)\Delta u_j-\mu(1)\nabla\mathrm{div} u_j+\nabla \rho_j+\nabla \theta_j=\Delta_jG_2,\\
\partial_t\theta_j-\kappa\Delta \theta_j=\Delta_jG_3,
\end{cases}\end{equation}
where $\rho_j=\Delta_j\rho$, $u_j=\Delta_ju$ and $\theta_j=\Delta_j\theta$. Let $\lambda<\frac12$ be a positive constant chosen be later. Multiplying the first equation of $(\ref{3.2})$ by $\rho_j-\Delta \rho_j-\lambda \mathrm{div} u_j$ and integrating by parts, it follows that
\begin{align}\label{ming1}
\frac12\frac{\mathrm{d}}{\mathrm{d} t}||\rho_j||^2_{H^1}-\lambda ||\mathrm{div} u_j||^2_{L^2}=\lambda(\partial_t\rho_j,\mathrm{div} u_j)-(\mathrm{div} u_j,\rho_j-\Delta \rho_j)+(\Delta_jG_1,\rho_j-\Delta \rho_j+\lambda \mathrm{div} u_j).
\end{align}
Multiplying the second equation of $(\ref{3.2})$ by $u_j-\Delta u_j+\lambda \nabla \rho_j$ and integrating by parts, we obtain
\begin{align}\label{ming2}
&\frac12\frac{\mathrm{d}}{\mathrm{d} t}||u_j||^2_{H^1}+\mu(1)||\nabla u_j||^2_{L^2}+\mu(1)||\Delta u_j||^2_{L^2}+\mu(1)||\mathrm{div} u_j||^2_{H^1}+\lambda||\nabla \rho_j||^2_{L^2}\nonumber\\
&\quad=-\lambda(\partial_tu_j,\nabla\rho_j)-(\nabla \rho_j,u_j-\Delta u_j)-(\nabla \theta_j,u_j-\Delta u_j)
+2\lambda\mu(1)(\Delta u_j,\nabla \rho_j)\nonumber\\
&\quad\quad-\lambda(\nabla\rho_j,\nabla \theta_j)+(\Delta_jG_2,u_j-\Delta u_j+\lambda \nabla \rho_j).
\end{align}
Multiplying the third equation of $(\ref{3.2})$ by $\theta_j-\Delta \theta_j$ and integrating by parts, we have
\begin{align}\label{ming3}
\frac12\frac{\mathrm{d}}{\mathrm{d} t}||\theta_j||^2_{H^1}+\kappa||\nabla \theta_j||^2_{L^2}+\kappa||\Delta \theta_j||^2_{L^2}=(\Delta_jG_3,\theta_j-\Delta \theta_j).
\end{align}
By using  the fact that
\begin{align*}
(\mathrm{div} u_j,\rho_j-\Delta \rho_j)+(\nabla \rho_j,u_j-\Delta u_j)=0,
\end{align*}
we can deduce from \eqref{ming1}--\eqref{ming3} that
\begin{align}\label{ming4}
& \ \frac12\frac{\mathrm{d}}{\mathrm{d} t}\Big(||\rho_j||^2_{H^1}+||u_j||^2_{H^1}+2\lambda(u_j,\nabla \rho_j)+\Lambda||\theta_j||^2_{H^1}\Big)+\mu(1)||\nabla u_j||^2_{L^2} +\mu(1)||\Delta u_j||^2_{L^2}
\nonumber\\
&\quad+\mu(1)||\mathrm{div} u_j||^2_{H^1} -\lambda||\mathrm{div} u_j||^2_{L^2}+\lambda||\nabla \rho_j||^2_{L^2}+\Lambda\kappa||\nabla \theta_j||^2_{L^2}+\Lambda\kappa||\Delta \theta_j||^2_{L^2}\nonumber\\
&=-(\nabla \theta_j,u_j-\Delta u_j)+2\lambda\mu(1)(\Delta u_j,\nabla \rho_j)-\lambda(\nabla\rho_j,\nabla\theta_j)\nonumber\\
&\quad+ (\Delta_jG_1,\rho_j-\Delta \rho_j+\lambda \mathrm{div} u_j)+(\Delta_jG_2,u_j-\Delta u_j+\lambda \nabla \rho_j)+\Lambda(\Delta_jG_3,\theta_j-\Delta \theta_j),
\end{align}
where $\Lambda$ is a large positive constant chosen be later. Note that for $j\geq 0$, we have $||u_j||_{L^2}\leq c_02^{-j}||\nabla u_j||_{L^2}$ for some $c_0>0$. Then, it is easy to get for $j\geq0$ that
\begin{align}\label{3.3}
||\Delta u_j||^2_{L^2}+||\nabla u_j||^2_{L^2}\approx ||\nabla u_j||^2_{H^1}, \quad ||\Delta \theta_j||^2_{L^2}+||\nabla \theta_j||^2_{L^2}\approx ||\nabla \theta_j||^2_{H^1},
\end{align}
\begin{align}\label{3.4}
||\rho_j||^2_{H^1}+||u_j||^2_{H^1}+2\lambda(u_j,\nabla \rho_j)+\Lambda||\theta_j||^2_{H^1}\approx ||\rho_j||^2_{H^1}+||u_j||^2_{H^1}+\Lambda||\theta_j||^2_{L^2},
\end{align}
\begin{align}\label{3.5}
&\quad \ |(\nabla \theta_j,u_j-\Delta u_j)|+|2\lambda\mu(1)(\Delta u_j,\nabla \rho_j)|+\lambda|(\nabla\rho_j,\nabla\theta_j)|\nonumber\\&\leq \frac12\mu(1)||\Delta u_j||^2_{L^2}+\frac12\mu(1)||\nabla u_j||^2_{L^2}+(4\lambda^2\mu(1)+\lambda^2)||\nabla\rho_j||^2_{L^2}+\frac{c^2_0+1+\mu(1)}{\mu(1)}||\nabla\theta_j||^2_{L^2}.
\end{align}

Choosing $\lambda$ small enough and $\Lambda$ big enough and combining $(\ref{3.3})-(\ref{3.5})$, we infer from \eqref{ming4} for $j\geq 0$ that
\begin{align}\label{3}
&\quad \ ||\rho_j||^2_{H^1}+||u_j||^2_{H^1}+||\theta_j||^2_{H^1}+\int^t_0(||\nabla \rho_j||^2_{L^2}+||\nabla u_j||^2_{H^1}+||\nabla \theta_j||^2_{H^1})\mathrm{d} \tau \nonumber
\\& \lesssim ||\Delta_j\rho_0||^2_{H^1}+||\Delta_j u_0||^2_{H^1}+||\Delta_j\theta_0||^2_{H^1}+\int^t_0\Big(|(\Delta_jG_1,\rho_j-\Delta \rho_j+\lambda \mathrm{div} u_j)| \nonumber     \\& \quad \ +|(\Delta_jG_2,u_j-\Delta u_j+\lambda \nabla \rho_j)|+\Lambda|(\Delta_jG_3,\theta_j-\Delta \theta_j)|\Big)\mathrm{d} \tau.
\end{align}
Thanks to the H$\ddot{\mathrm{o}}$lder inequality, we can also obtain for $j\geq 0$ that
\begin{align}\label{3.6}
\Lambda|(\Delta_jG_3,\theta_j-\Delta \theta_j)|\lesssim ||\Delta_jG_3||_{L^2}||\nabla \theta_j||_{H^1},
\end{align}
\begin{align}\label{3.7}
|(\Delta_jG_1,\rho_j+\lambda \mathrm{div} u_j)|\lesssim 2^{-j}||\Delta_jG_1||_{L^2}(||\nabla \rho_j||_{L^2}+||\nabla u_j||_{H^1}),
\end{align}
\begin{align}\label{3.8}
|(\Delta_jG_2,u_j-\Delta u_j+\lambda \nabla \rho_j)|\lesssim ||\Delta_jG_2||_{L^2}(||\nabla \rho_j||_{L^2}+||\nabla u_j||_{H^1}).
\end{align}

Using $(\ref{3.6})-(\ref{3.8})$ and applying Lemma \ref{le2.7}, the H\"{o}lder inequality, it follows that
\begin{align}\label{3.9}
&\quad \ ||\rho_j||^2_{H^1}+||u_j||^2_{H^1}+||\theta_j||^2_{H^1}+\int^t_0(||\nabla \rho_j||^2_{L^2}+||\nabla u_j||^2_{H^1}+||\nabla \theta_j||^2_{H^1})\mathrm{d} \tau \nonumber
\\& \lesssim ||\Delta_j\rho_0||^2_{H^1}+||\Delta_j u_0||^2_{H^1}+||\Delta_j\theta_0||^2_{H^1}+ \int^t_0c^2_j2^{-2js}||\rho||_{H^{s}}(||\nabla \rho||^2_{H^{s-1}}+||\nabla u||^2_{H^{s}})\mathrm{d} \tau \nonumber\\& \quad \
+\int^t_0(2^{-2j}||\Delta_jG_1||^2_{L^2}+||\Delta_jG_2||^2_{L^2}+||\Delta_jG_3||^2_{L^2})\mathrm{d} \tau.
\end{align}
By the same argument as in $(\ref{3})$, we can deduce that
\begin{align}\label{3.10}
& \quad \ ||\rho||^2_{H^1}+||u||^2_{H^1}+||\theta||^2_{H^1}+\int^t_0(||\nabla \rho||^2_{L^2}+||\nabla u||^2_{H^1}+||\nabla \theta||^2_{H^1})\mathrm{d} \tau \nonumber
\\& \lesssim ||\rho_0||^2_{H^1}+||u_0||^2_{H^1}+||\theta_0||^2_{H^1} +\int^t_0\Big(|(G_1,\rho-\Delta \rho+\lambda \mathrm{div} u)|+|(G_2,u-\Delta u+\lambda \nabla \rho)|
\nonumber \\& \quad \ +\Lambda|(G_3,\theta-\Delta \theta)|\Big)\mathrm{d} \tau.
\end{align}
Note that
\begin{align}\label{wan1}
&\quad \ ||\Delta_{-1}\rho||^2_{H^{s}}+||\Delta_{-1}u||^2_{H^{s}}+||\Delta_{-1}\theta||^2_{H^{s}} \nonumber
\\& \quad \ +\int^t_0(||\Delta_{-1}\nabla \rho||^2_{H^{s-1}}+||\Delta_{-1}\nabla u_||^2_{H^{s}}+||\Delta_{-1}\nabla \theta||^2_{H^{s}})\mathrm{d} \tau \nonumber
\\& \lesssim  ||\rho||^2_{H^1}+||u||^2_{H^1}+||\theta||^2_{H^1}+\int^t_0(||\nabla \rho||^2_{L^2}+||\nabla u||^2_{H^1}+||\nabla \theta||^2_{H^1})\mathrm{d} \tau.
\end{align}
Now, multiplying $(\ref{3.9})$ by $2^{2j(s-1)}$ and then summing for $j\geq0$, it follows by  (\ref{3.10}), (\ref{wan1}) that
\begin{align}\label{3.11}
& \quad \ ||\rho||^2_{H^{s}}+||u||^2_{H^{s}}+||\theta||^2_{H^{s}}+\int^t_0(||\nabla \rho||^2_{H^{s-1}}+||\nabla u||^2_{H^{s}}+||\nabla \theta||^2_{H^{s}})\mathrm{d} \tau \nonumber
\\& \lesssim ||\rho_0||^2_{H^{s}}+||u_0||^2_{H^{s}}+||\theta_0||^2_{H^s}+\int^t_0(||G_1||^2_{\dot{H}^{s-2}}+||G_2||^2_{\dot{H}^{s-1}}+||G_3||^2_{\dot{H}^{s-1}})\mathrm{d} \tau \nonumber\\& \quad \ +\int^t_0\Big(|(G_1,\rho-\Delta \rho+\lambda \mathrm{div} u)|+|(G_2,u-\Delta u+\lambda \nabla \rho)|+\Lambda|(G_3,\theta-\Delta \theta)|\Big)\mathrm{d} \tau \nonumber
\\& \quad \ + \int^t_0||\rho||_{H^{s}}(||\nabla \rho||^2_{H^{s-1}}+||\nabla u||^2_{H^{s}})\mathrm{d} \tau.
\end{align}
By $||\nabla u||_{L^\infty}\lesssim ||\nabla u||_{H^{s}}$ and Lemma \ref{le2.3}, one has
\begin{align}\label{299}
&\quad \ |(\mathrm{div}(\rho u),\Delta \rho)|=|(u\cdot\nabla \rho,\Delta \rho)|+|(\mathrm{div}(\rho\mathrm{div} u),\nabla \rho)| \nonumber\\&\lesssim \Big|\int_{\mathbb{R}^d}\mathrm{div} u |\nabla \rho|^2\mathrm{d} x\Big|+\Big|\int_{\mathbb{R}^d}\nabla u:(\nabla\rho\otimes \nabla \rho)\mathrm{d} x\Big|+||\nabla \rho||_{L^2}||\rho\mathrm{div} u||_{H^1}\nonumber
\\&\lesssim ||\nabla u||_{H^{s}}||\nabla \rho||^2_{L^2}+||\nabla \rho||_{L^2}||\rho||_{H^1}||\nabla u||_{H^{s}}\lesssim ||\nabla \rho||_{L^2}||\rho||_{H^1}||\nabla u||_{H^{s}}.
\end{align}
Note that
\begin{equation}\label{300}
\begin{cases}
||\rho u||_{L^2}\lesssim||\rho||^2_{L^4}+||u||^2_{L^4}\lesssim ||\nabla u||_{L^2}||u||_{L^2}+||\nabla \rho||_{L^2}||\rho||_{L^2}, \quad \mathrm{if} \quad d=2,\\
||\rho u||_{L^2}\lesssim||\rho||_{L^6}||u||_{L^3}\lesssim ||\nabla \rho||_{L^2}||u||_{H^1}, \quad \quad \quad \quad  \quad \quad  \quad \quad \quad \  \mathrm{if} \quad d=3,\\
||\rho u||_{L^2}\lesssim||\rho||_{L^4}||u||_{L^4}\lesssim ||\nabla \rho||_{L^2}||\nabla u||_{L^2}, \quad \quad \quad \quad  \quad \quad  \quad \quad \ \mathrm{if} \quad d=4.
\end{cases}\end{equation}
Therefore, according to (\ref{300}), we obtain
\begin{align}\label{301}
||\rho u||_{L^2}\lesssim (||\nabla u||_{L^2}+||\nabla \rho||_{L^2})(||u||_{H^1}+||\rho||_{H^1}).
\end{align}
Note that $||\rho(t,\cdot)||_{L^\infty}\leq \frac12$. Then, for any smooth function $f(x)$ such that $f(0)=0$, we also have
\begin{align}\label{302}
||f(\rho) u||_{L^2}\lesssim (||\nabla u||_{L^2}+||\nabla \rho||_{L^2})(||u||_{H^1}+||\rho||_{H^1}).
\end{align}
Also, by (\ref{301}), we get
\begin{equation*}
|(\mathrm{div}(\rho u),\rho+\lambda \mathrm{div} u)|\lesssim (||\nabla \rho||_{L^2}+||\nabla \mathrm{div} u||_{L^2})(||\nabla u||_{L^2}+||\nabla \rho||_{L^2})(||u||_{H^1}+||\rho||_{H^1}).
\end{equation*}
Combining (\ref{299}) and (\ref{301}), we have
\begin{align}\label{3.14}
|(G_1,\rho-\Delta \rho+\lambda \mathrm{div} u)|&\lesssim (||\nabla \rho||_{L^2}+||\nabla \mathrm{div} u||_{L^2})||\rho u||_{L^2}+|(G_1,\Delta \rho)|\nonumber
\\&\lesssim (||\nabla \rho||_{H^{s-1}}+||\nabla u||_{H^{s}})^2(||\rho||_{H^{s}}+||u||_{H^{s}}).
\end{align}
By the facts $||\rho(t,\cdot)||_{L^\infty}\leq \frac12$, $||\theta(t,\cdot)||_{L^\infty}\leq \frac12$ and Lemmas $\ref{le2.3}-\ref{le2.4}$, we get
\begin{align}\label{103}
& \quad \ \Big\|\frac{\kappa'(\theta)}{\rho+1}-\kappa'(0)\Big\|_{H^{s}}\nonumber\\&\lesssim \Big\|\frac{\kappa'(\theta)-\kappa'(0)}{\rho+1}\Big\|_{H^{s}}+\Big\|\kappa'(0)(\frac{1}{\rho+1}-1)\Big\|_{H^{s}} \nonumber
\\&\lesssim \Big\|(\kappa'(\theta)-\kappa'(0))(\frac{1}{\rho+1}-1)+\kappa'(\theta)-\kappa'(0)\Big\|_{H^{s}}+\Big\|\kappa'(0)(\frac{1}{\rho+1}-1)\Big\|_{H^{s}} \nonumber
\\&\lesssim ||\theta||_{H^{s}}+||\rho||_{H^{s}},
\end{align}
\begin{align}\label{104}
\Big\|\frac{\kappa(\theta)}{\rho+1}-\kappa\Big\|_{H^{s}}&\lesssim \Big\|\frac{\kappa(\theta)-\kappa}{\rho+1}\Big\|_{H^{s}}+||\kappa(\frac{1}{\rho+1}-1)\Big\|_{H^{s}} \nonumber
\\&\lesssim \Big\|(\kappa(\theta)-\kappa)(\frac{1}{\rho+1}-1)+\kappa(\theta)-\kappa\Big\|_{H^{s}}+\Big\|\kappa(\frac{1}{\rho+1}-1)\Big\|_{H^{s}} \nonumber
\\&\lesssim ||\theta||_{H^{s}}+||\rho||_{H^{s}},
\end{align}
\begin{equation}\label{106}\begin{cases}
\Big\|\frac{\nabla \rho}{\rho+1}\Big\|_{\dot{H}^{s-1}}\lesssim ||\nabla\ln(1+\rho)||_{\dot{H}^{s-1}}\lesssim ||\ln(1+\rho)||_{\dot{H}^{s}}\lesssim ||\rho||_{\dot{H}^{s}} \lesssim ||\nabla\rho||_{\dot{H}^{s-1}},\\
\Big\|\frac{\nabla \rho}{\rho+1}\Big\|_{\dot{H}^{\frac d2-1}}\lesssim ||\nabla\ln(1+\rho)||_{\dot{H}^{\frac d2-1}}\lesssim ||\ln(1+\rho)||_{\dot{H}^{\frac d2}}\lesssim ||\rho||_{\dot{H}^{\frac d2}} \lesssim ||\nabla\rho||_{\dot{H}^{\frac d2-1}}.
\end{cases}\end{equation}
Following the same argument as in (\ref{301}), we also have
\begin{align}\label{303}
|||u|^2||_{L^2}\lesssim ||u||_{H^1}||\nabla u||_{L^2}, \quad ||\theta u||_{L^2}\lesssim (||\nabla u||_{L^2}+||\nabla \theta||_{L^2})(||u||_{H^1}+||\theta||_{H^1}).
\end{align}
Applying Lemmas $\ref{le2.3}-\ref{le2.4}$ and using (\ref{302}), (\ref{303}), one has
\begin{align}\label{304}
|(G_2,-\Delta u+\lambda \nabla \rho)|&\lesssim (||\nabla \rho||_{L^2}+||\Delta u||_{L^2})\Big(\Big\|(\frac{\mu(\rho+1)}{\rho+1}-\mu(1))(\Delta u+\nabla \mathrm{div} u)\Big\|_{L^2} \nonumber
\\& \quad \ +||\frac{\nabla \rho}{\rho+1}\theta||_{L^2}+||u\cdot\nabla u||_{L^2}+||\frac{\mu'(\rho+1)}{\rho+1}(\nabla u+(\nabla u)^T)\cdot\nabla \rho||_{L^2} \nonumber
\\& \quad \ +||(P'_e(\rho+1)-1)\nabla \rho||_{L^2}\Big) \nonumber
\\&\lesssim (||\nabla \rho||_{L^2}+||\Delta u||_{L^2})(||\rho||_{H^{s}}||\nabla u||_{H^1}+||\nabla \rho||_{L^2}||\theta||_{H^{s}}\nonumber
\\& \quad \ +||u||_{H^{s}}||\nabla u||_{L^2}+||\nabla\rho||_{L^2}||\nabla u||_{H^{s}}+||\rho||_{H^{s}}||\nabla \rho||_{L^2}),
\end{align}
\begin{align}\label{305}
|(G_2,u)|&\lesssim \Big\|(\frac{\mu(\rho+1)}{\rho+1}-\mu(1))\cdot u\Big\|_{L^2}||\Delta u+\nabla \mathrm{div} u||_{L^2}
 +\Big\|\frac{\nabla \rho}{\rho+1}\Big\|_{L^2}||u\theta||_{L^2} \nonumber
 \\& \quad \ +|||u|^2||_{L^2}||\mathrm{div} u||_{L^2}+\Big\|\frac{\mu'(\rho+1)}{\rho+1}(\nabla u+(\nabla u)^T)\cdot\nabla \rho\Big\|_{L^2}||u||_{L^2} \nonumber
\\& \quad \ +||(P'_e(\rho+1)-1)\cdot u||_{L^2}||\nabla \rho||_{L^2} \nonumber
\\&\lesssim (||\nabla u||_{H^1}+||\nabla \rho||_{L^2})(||\nabla u||_{L^2}+||\nabla \rho||_{L^2})(||u||_{H^1}+||\rho||_{H^1}) \nonumber
\\& \quad \ +||\nabla \rho||_{L^2}(||\nabla u||_{L^2}+||\nabla \theta||_{L^2})(||u||_{H^1}+||\theta||_{H^1}) +||u||_{H^1}||\nabla u||^2_{L^2}\nonumber
\\& \quad \ +||\nabla\rho||_{L^2}||\nabla u||_{H^{s}}||u||_{L^2}.
\end{align}
Combining $(\ref{304})$ and $(\ref{305})$, we have
\begin{align}\label{3.15}
& \quad \ |(G_2,u-\Delta u+\lambda \nabla \rho)|\nonumber \\&\lesssim (||\nabla \rho||_{H^{s-1}}+||\nabla u||_{H^{s}}+||\nabla \theta||_{H^{s}})^2(||\rho||_{H^{s}}+||u||_{H^{s}}+||\theta||_{H^{s}}).
\end{align}
For any $1\leq q< \infty$, it is easy to see that
\begin{align*}
\Big\|\frac{\kappa(\theta)}{\rho+1}-\kappa\Big\|_{L^q}\lesssim \Big\|\frac{\kappa(\theta)-\kappa}{\rho+1}\Big\|_{L^q}+\Big\|\kappa(\frac{1}{\rho+1}-1)\Big\|_{L^q}\lesssim ||\theta||_{L^q}+||\rho||_{L^q},
\end{align*}
which along with the same argument as in (\ref{300}) leads to
\begin{align}\label{306}
\Big\|(\frac{\kappa(\theta)}{\rho+1}-\kappa)\theta\Big\|_{L^2}\lesssim (||\nabla \rho||_{L^2}+||\nabla \theta||_{L^2})(||\rho||_{H^1}+||\theta||_{H^1}).
\end{align}
According to Lemma \ref{le2.3} and using (\ref{103}), (\ref{306}), we get
\begin{align}\label{307}
|(G_3,\Delta \theta)|
&\lesssim ||\Delta \theta||_{L^2}\Big(\Big\|(\frac{\kappa(\theta)}{\rho+1}-\kappa)\Delta \theta\Big\|_{L^2}+\Big\|\frac{\mu(\rho+1)}{\rho+1}(\nabla u+(\nabla u)^T):\nabla u\Big\|_{L^2} \nonumber
\\& \quad \ +||\theta\mathrm{div} u||_{L^2}+||u\cdot \nabla \theta||_{L^2}+\Big\|\frac{\kappa'(\theta)}{\rho+1}|\nabla \theta|^2\Big\|_{L^2}\Big) \nonumber
\\&\lesssim ||\Delta \theta||_{L^2}||\Delta \theta||_{L^2}(||\rho||_{H^{s}}+||\theta||_{H^{s}})+||\Delta\theta||_{L^2}||\nabla u||_{L^2}||\nabla u||_{H^{s}} \nonumber
\\& \quad \ +||\Delta \theta||_{L^2}(||\theta||_{L^2}||\nabla u||_{H^{s}}+||u||_{H^{s}}||\nabla \theta||_{L^2})
+||\nabla\theta||_{H^{s}}||\nabla \theta||_{L^2}||\Delta \theta||_{L^2},
\end{align}
\begin{align}\label{308}
|(G_3,\theta)|&\lesssim \Big\|(\frac{\kappa(\theta)}{\rho+1}-\kappa)\theta\Big\|_{L^2}||\Delta \theta||_{L^2}+\Big\|\frac{\mu(\rho+1)}{\rho+1}\theta\Big\|_{L^2}||(\nabla u+(\nabla u)^T):\nabla u||_{L^2} \nonumber
\\& \quad \ +|||\theta|^2||_{L^2}||\mathrm{div} u||_{L^2}+||u\theta||_{L^2}||\nabla \theta||_{L^2}+\Big\|\frac{\kappa'(\theta)}{\rho+1}\theta||_{L^2}|||\nabla \theta|^2\Big\|_{L^2} \nonumber
\\&\lesssim ||\Delta \theta||_{L^2}(||\nabla \rho||_{L^2}+||\nabla \theta||_{L^2})(||\rho||_{H^1}+||\theta||_{H^1})+||\theta||_{L^2}||\nabla u||_{L^2}||\nabla u||_{H^{s}}\nonumber
\\& \quad \ +||\nabla \theta||_{L^2}||\theta||_{H^1}||\nabla u||_{L^2}+||\nabla \theta||_{L^2}(||\nabla u||_{L^2}+||\nabla \theta||_{L^2})(||u||_{H^1}+||\theta||_{H^1}) \nonumber
\\& \quad \ +||\theta||_{L^2}||\nabla \theta||_{H^{s}}||\nabla \theta||_{L^2}.
\end{align}
Summing up $(\ref{307}), (\ref{308})$, we obtain
\begin{align}\label{3.16}
& \quad \ |(G_3,\theta-\Delta \theta)|\nonumber\\&\lesssim (||\nabla \rho||_{H^{s-1}}+||\nabla u||_{H^{s}}+||\nabla \theta||_{H^{s}})^2(||\rho||_{H^{s}}+||u||_{H^{s}}+||\theta||_{H^{s}}).
\end{align}
Using Lemma \ref{le2.3} gives rise to
\begin{align}\label{3.17}
||G_1||^2_{\dot{H}^{s-2}}&\lesssim ||\rho u||^2_{\dot{H}^{s-1}}\lesssim ||u||^2_{\dot{H}^{s-1}}||\rho||^2_{L^\infty}+||\rho||^2_{\dot{H}^{s-1}}||u||^2_{L^\infty} \nonumber
\\&\lesssim (||\nabla \rho||^2_{H^{s-1}}+||\nabla u||^2_{H^{s}})(||\rho||^2_{H^{s}}+||u||^2_{H^{s}}).
\end{align}
By Lemmas $\ref{le2.3}-\ref{le2.4}$ and (\ref{106}), we deduce that
\begin{align} \label{3.18}
||G_2||^2_{\dot{H}^{s-1}}&\lesssim \Big\|(\frac{\mu(\rho+1)}{\rho+1}-\mu(1))(\Delta u+\nabla \mathrm{div} u)\Big\|^2_{\dot{H}^{s-1}}
+\Big\|\frac{\nabla \rho}{\rho+1}\theta\Big\|^2_{\dot{H}^{s-1}}+||u\cdot\nabla u||^2_{\dot{H}^{s-1}} \nonumber
\\& \quad \ +\Big\|\frac{\mu'(\rho+1)}{\rho+1}(\nabla u+(\nabla u)^T)\cdot\nabla \rho\Big\|^2_{\dot{H}^{s-1}}
 +||(P'_e(\rho+1)-1)\nabla \rho||^2_{\dot{H}^{s-1}} \nonumber
\\&\lesssim ||\rho||^2_{H^{s}}||\nabla u||^2_{H^{s}}+||\nabla\rho||^2_{\dot{H}^{s-1}}||\theta||^2_{L^\infty}+||\nabla \rho||^2_{H^{s-1}}||\theta||^2_{\dot{H}^{s}}+||\nabla u||^2_{H^{s-1}}||u||^2_{H^{s}}\nonumber
\\& \quad \ +||\nabla \rho||^2_{H^{s-1}}||\nabla u||^2_{H^{s}}(1+||\rho||^2_{H^{s}})+||\rho||^2_{H^{s}}||\nabla \rho||^2_{H^{s-1}}  \nonumber
\\&\lesssim (||\nabla \rho||^2_{H^{s-1}}+||\nabla u||^2_{H^{s}}+||\nabla \theta||^2_{H^{s}})(||\rho||^2_{H^{s}}+||u||^2_{H^{s}}+||\theta||^2_{H^{s}}) \nonumber
\\& \quad \ +||\nabla u||^2_{H^{s}}||\rho||^2_{H^{s}}||\nabla \rho||^2_{H^{s-1}}.
\end{align}
Applying Lemma \ref{le2.3} and combining $(\ref{103})-(\ref{104})$, one has
\begin{align} \label{3.19}
||G_3||^2_{\dot{H}^{s-1}}&\lesssim \Big\|(\frac{\kappa(\theta)}{\rho+1}-\kappa)\Delta \theta\Big\|^2_{\dot{H}^{s-1}}+\Big\|\frac{\mu(\rho+1)}{\rho+1}(\nabla u+(\nabla u)^T):\nabla u\Big\|^2_{\dot{H}^{s-1}}\nonumber
\\& \quad \ +||\theta\mathrm{div} u||^2_{\dot{H}^{s-1}} +||u\cdot \nabla \theta||^2_{\dot{H}^{s-1}}+\Big\|\frac{\kappa'(\theta)}{\rho+1}|\nabla \theta|^2\Big\|^2_{\dot{H}^{s-1}}\nonumber
\\&\lesssim (||\rho||^2_{H^{s}}+||\theta||^2_{H^{s}})||\nabla \theta||^2_{H^{s}}+(1+||\rho||^2_{H^{s}})||\nabla u||^2_{H^{s-1}}||\nabla u||^2_{H^{s}} \nonumber
\\&\quad \ +(1+||\rho||^2_{H^{s}}+||\theta||^2_{H^{s}})||\nabla \theta||^2_{H^{s-1}}||\nabla \theta||^2_{H^{s}}+||u||^2_{H^{s}}||\nabla \theta||^2_{H^{s-1}}\nonumber
\\& \quad \ +||\theta||^2_{H^{s}}||\nabla u||^2_{H^{s-1}} \nonumber
\\&\lesssim (||\nabla \rho||^2_{H^{s-1}}+||\nabla u||^2_{H^{s}}+||\nabla \theta||^2_{H^{s}})(||\rho||^2_{H^{s}}+||u||^2_{H^{s}}+||\theta||^2_{H^{s}})\nonumber
\\& \quad \ + (||\theta||^2_{H^{s}}||\rho||^2_{H^{s}}+||\theta||^2_{H^{s}}||\theta||^2_{H^{s}})||\nabla \theta||^2_{H^{s}}+||\rho||^2_{H^{s}}||u||^2_{H^{s}}||\nabla u||^2_{H^{s}}.
\end{align}
Therefore, substituting (\ref{3.14}), (\ref{3.15}), $(\ref{3.16})-(\ref{3.19})$ into $(\ref{3.11})$ and then using H\"{o}lder's inequality, we obtain for all $t\in[0,T]$
\begin{align*}
& \quad \ ||\rho(t)||^2_{H^{s}}+||u(t)||^2_{H^{s}}+||\theta(t)||^2_{H^{s}}+\int^t_0(||\nabla \rho||^2_{H^{s-1}}+||\nabla u||^2_{H^{s}}+||\nabla \theta||^2_{H^{s}})\mathrm{d} \tau \nonumber
\\& \lesssim ||\rho_0||^2_{H^{s}}+||u_0||^2_{H^{s}}+||\theta_0||^2_{H^{s}}
\\& \quad \quad +\int^t_0(||\rho||^2_{H^{s}}+||u||^2_{H^{s}}+||\theta||^2_{H^{s}})
 (||\nabla \rho||^2_{H^{s-1}}+||\nabla u||^2_{H^{s}}+||\nabla \theta||^2_{H^{s}})\mathrm{d} \tau
 \\& \quad \quad   +\int^t_0(||\rho||^2_{H^{s}}+||u||^2_{H^{s}}+||\theta||^2_{H^{s}})^2(||\nabla \rho||^2_{H^{s-1}}+||\nabla u||^2_{H^{s}}+||\nabla \theta||^2_{H^{s}})\mathrm{d} \tau,
\end{align*}
which implies
\begin{align*}
||(\rho,u,\theta)||_{E(T)}\lesssim ||(\rho,u,\theta)||_{E(0)}+||(\rho,u,\theta)||^2_{E(T)}+||(\rho,u,\theta)||^3_{E(T)}.
\end{align*}
This completes the proof of Proposition \ref{th3.1}.
\end{proof}

\section{Proof of Theorem 1.1}

\quad \quad This section is devoted to proving the global well-posedness of the system $(\ref{1.2})$ for the small initial data.

\textbf{Step 1: Construction of the approximate solutions.} We first construct the approximate solutions and the construction is based on the classical Friedrich's method as in \cite{B.C.D}. Define the smoothing operator
$$\mathcal{J}_{\varepsilon}f=\mathcal{F}^{-1}(1_{0\leq|\xi|\leq \frac{1}{\varepsilon}}\mathcal{F}f).$$
We consider the following approximate system of (\ref{1.2}) for $U_{\varepsilon}=(\rho_{\varepsilon},u_{\varepsilon},\theta_{\varepsilon})$:
\begin{equation}\label{4.1}
\frac{\partial_tU_{\varepsilon}}{\partial t}=\mathcal{G}(U_{\varepsilon}), \quad U_{\varepsilon}=\mathcal{J}_{\varepsilon}(\rho_0,u_0,\theta_0),
\end{equation}
where $\mathcal{G}_{\varepsilon}(U_{\varepsilon})=(\mathcal{G}^{(1)}_{\varepsilon}(U_{\varepsilon}),\mathcal{G}^{(2)}_{\varepsilon}(U_{\varepsilon}),\mathcal{G}^{(3)}_{\varepsilon}(U_{\varepsilon}))$ is defined by
\begin{eqnarray}
\left\{\begin{aligned}
\mathcal{G}^{(1)}_{\varepsilon}=&-\mathcal{J}_{\varepsilon}\mathrm{div}(\mathcal{J}_{\varepsilon}\rho_{\varepsilon} \mathcal{J}_{\varepsilon}u_{\varepsilon})-\mathrm{div}(\mathcal{J}_{\varepsilon}u_{\varepsilon}),\\
\mathcal{G}^{(2)}_{\varepsilon}=&-\mathcal{J}_{\varepsilon}(\mathcal{J}_{\varepsilon}u_{\varepsilon}\cdot \nabla \mathcal{J}_{\varepsilon}u_{\varepsilon})+\mu(1)(\Delta \mathcal{J}_{\varepsilon}u_{\varepsilon}+\nabla\mathrm{div} \mathcal{J}_{\varepsilon}u_{\varepsilon}) +\mathcal{J}_{\varepsilon}\Big(\frac{\nabla \mathcal{J}_{\varepsilon}\rho_{\varepsilon}}{\mathcal{J}_{\varepsilon}\rho_{\varepsilon}+1}\mathcal{J}_{\varepsilon}\theta_{\varepsilon}\Big)\\
& -\nabla \mathcal{J}_{\varepsilon}\rho_{\varepsilon}-\nabla \mathcal{J}_{\varepsilon}\theta_{\varepsilon} -\mathcal{J}_{\varepsilon}\big((P'_e(\mathcal{J}_{\varepsilon}\rho_{\varepsilon}+1)-1)\nabla \mathcal{J}_{\varepsilon}\rho_{\varepsilon}\big)+\mathcal{J}_{\varepsilon}\Big((\frac{\mu(\mathcal{J}_{\varepsilon}\rho_{\varepsilon}+1)}{\mathcal{J}_{\varepsilon}\rho_{\varepsilon}+1}-\mu(1))\\
&\quad\times(\Delta\mathcal{J}_{\varepsilon}u_{\varepsilon}+\nabla\mathrm{div} \mathcal{J}_{\varepsilon} u_{\varepsilon})+\frac{\mu'(\mathcal{J}_{\varepsilon}\rho_{\varepsilon}+1)}{\mathcal{J}_{\varepsilon}\rho_{\varepsilon}+1}(\nabla \mathcal{J}_{\varepsilon} u_{\varepsilon}+(\nabla \mathcal{J}_{\varepsilon} u_{\varepsilon})^T)\cdot\nabla \mathcal{J}_{\varepsilon}\rho_{\varepsilon}\Big),\\
\mathcal{G}^{(3)}_{\varepsilon}=&\mathcal{J}_{\varepsilon}((\frac{\kappa(\mathcal{J}_{\varepsilon}\theta_{\varepsilon})}{\mathcal{J}_{\varepsilon}\rho_{\varepsilon}+1}-\kappa)\Delta \mathcal{J}_{\varepsilon}\theta_{\varepsilon})+\mathcal{J}_{\varepsilon}\big(\frac{\mu(\mathcal{J}_{\varepsilon}\rho_{\varepsilon}+1)}{\mathcal{J}_{\varepsilon}\rho_{\varepsilon}+1}(\nabla \mathcal{J}_{\varepsilon}u_{\varepsilon}+(\nabla \mathcal{J}_{\varepsilon}u_{\varepsilon})^T):\nabla \mathcal{J}_{\varepsilon}u_{\varepsilon}\big)
\\
&  +\kappa\Delta \mathcal{J}_{\varepsilon}\theta_{\varepsilon}-\mathcal{J}_{\varepsilon}\big(\mathcal{J}_{\varepsilon}\theta_{\varepsilon}\mathrm{div} \mathcal{J}_{\varepsilon}u_{\varepsilon}\big)
-\mathcal{J}_{\varepsilon}\big(\mathcal{J}_{\varepsilon}u_{\varepsilon}\cdot \nabla \mathcal{J}_{\varepsilon}\theta_{\varepsilon}\big)+\mathcal{J}_{\varepsilon}\big(\frac{\kappa'(\mathcal{J}_{\varepsilon}\theta_{\varepsilon})}{\mathcal{J}_{\varepsilon}\rho_{\varepsilon}+1}|\nabla \mathcal{J}_{\varepsilon}\theta_{\varepsilon}|^2\big).
\end{aligned}\right.
\end{eqnarray}

Using the fact that $||\mathcal{J}_{\varepsilon}f||_{H^k}\lesssim (1+\frac{1}{\varepsilon^2})^{\frac k2}||f||_{L^2}$, it is easy to show that
$$
||\mathcal{G}_{\varepsilon}(U_{\varepsilon})||_{L^2}\leq C_{\varepsilon}f(||U_{\varepsilon}||_{L^2}),
$$
$$
||\mathcal{G}_{\varepsilon}(U_{\varepsilon})-\mathcal{G}_{\varepsilon}(\widetilde{U}_{\varepsilon})||_{L^2}\leq C_{\varepsilon}g(||U_{\varepsilon}||_{L^2},||\widetilde{U}_{\varepsilon}||_{L^2})\
||U_{\varepsilon}-\widetilde{U}_{\varepsilon}||_{L^2},
$$
where $f$ and $g$ are polynomials with positive coefficients. Therefore, the approximate system can be viewed as an ODE system on $L^2$. Then the Cauchy-Lipschitz theorem ensures that there exists a strictly maximal time $T_{\varepsilon}$ and a unique solution $(\rho_{\varepsilon},u_{\varepsilon},\theta_{\varepsilon})$, which is continuous in time with a value in $L^2$. As $\mathcal{J}^2_{\varepsilon}=\mathcal{J}_{\varepsilon}$, we know that $(\mathcal{J}_{\varepsilon}\rho_{\varepsilon},\mathcal{J}_{\varepsilon}u_{\varepsilon},\mathcal{J}_{\varepsilon}\theta_{\varepsilon})$ is also a solution of (\ref{4.1}). Therefore, $(\rho_{\varepsilon},u_{\varepsilon},\theta_{\varepsilon})=(\mathcal{J}_{\varepsilon}\rho_{\varepsilon},\mathcal{J}_{\varepsilon}u_{\varepsilon},\mathcal{J}_{\varepsilon}\theta_{\varepsilon})$. Thus, $(\rho_{\varepsilon},u_{\varepsilon},\theta_{\varepsilon})$ satisfies the following system
\begin{eqnarray}
\left\{\begin{aligned}
&\partial_t\rho_{\varepsilon}+\mathrm{div} u_{\varepsilon}=-\mathcal{J}_{\varepsilon}\mathrm{div}(\rho_{\varepsilon} u_{\varepsilon}),\nonumber\\
&\partial_t u_{\varepsilon}+\nabla \rho_{\varepsilon}+\nabla \theta_{\varepsilon}-\mu(1)(\Delta u_{\varepsilon}+\nabla\mathrm{div} u_{\varepsilon})=-\mathcal{J}_{\varepsilon}(u_{\varepsilon}\cdot \nabla u_{\varepsilon})-\mathcal{J}_{\varepsilon}\big((P'_e(\rho_{\varepsilon}+1)-1)\nabla \rho_{\varepsilon}\big)
\nonumber\\
 &\quad\quad+\mathcal{J}_{\varepsilon}\big(\frac{\nabla \rho_{\varepsilon}}{\rho_{\varepsilon}+1}\theta_{\varepsilon}\big) +\mathcal{J}_{\varepsilon}\big((\frac{\mu(\rho_{\varepsilon}+1)}{\rho_{\varepsilon}+1}-\mu(1))(\Delta u_{\varepsilon}+\nabla\mathrm{div} u_{\varepsilon})\big)
 \nonumber\\
 &\quad\quad+\mathcal{J}_{\varepsilon}\big(\frac{\mu'(\rho_{\varepsilon}+1)}{\rho_{\varepsilon}+1}(\nabla u_{\varepsilon}+(\nabla  u_{\varepsilon})^T)\cdot\nabla \rho_{\varepsilon}\big),\nonumber\\
&\partial_t\theta_{\varepsilon}-\kappa\Delta \theta_{\varepsilon}=\mathcal{J}_{\varepsilon}((\frac{\kappa(\theta_{\varepsilon})}{\rho_{\varepsilon}+1}-\kappa)\Delta \theta_{\varepsilon})+\mathcal{J}_{\varepsilon}\big(\frac{\mu(\rho_{\varepsilon}+1)}{\rho_{\varepsilon}+1}(\nabla u_{\varepsilon}+(\nabla u_{\varepsilon})^T):\nabla u_{\varepsilon}\big)
\nonumber\\
& \quad\quad -\mathcal{J}_{\varepsilon}(\theta_{\varepsilon}\mathrm{div} u_{\varepsilon})-\mathcal{J}_{\varepsilon}(u_{\varepsilon}\cdot \nabla \theta_{\varepsilon})
+\mathcal{J}_{\varepsilon}\big(\frac{\kappa'(\theta_{\varepsilon})}{\rho_{\varepsilon}+1}|\nabla \theta_{\varepsilon}|^2\big).
\end{aligned}\right.
\end{eqnarray}
Moreover, we also can conclude that $(\rho_{\varepsilon},u_{\varepsilon},\theta_{\varepsilon})\in E(T_{\varepsilon})$.

\textbf{Step 2: Uniform energy estimates.} Now, we will prove that $||(\rho_{\varepsilon},u_{\varepsilon},\theta_{\varepsilon})||_{E(T)}$ is uniformly bounded independent of $\varepsilon$ by the losing energy estimate. We assume that $\eta$ is small such that $||\rho_0||_{L^\infty}\leq \frac14$ and $||\theta_0||_{L^\infty}\leq \frac14$. Since the solution depends continuously on the time variable, then there exists a positive $T_0< T_{\varepsilon}$ such that the solution $(\rho_{\varepsilon},u_{\varepsilon},\theta_{\varepsilon})$ satisfies
$$||\rho_{\varepsilon}(t,\cdot)||_{L^\infty}\leq \frac12 \quad  \mathrm{and}  \quad ||\theta_{\varepsilon}(t,\cdot)||_{L^\infty}\leq \frac12 \quad  \mathrm{for} \quad  \mathrm{all} \quad  t\in[0,T_0],$$
$$||(\rho_{\varepsilon},u_{\varepsilon},\theta_{\varepsilon})||_{E(T_0)}\leq 2\mathfrak{C} ||(\rho,u,\theta)||_{E(0)}.$$
Without loss of generality, we assume that $T_0$ is a maximal time so that the above inequalities hold. In the following, we will give a refined estimate on $[0,T_0]$ for the solution. According to Proposition \ref{th3.1}, we obtain for all $0<T\leq T_0$,
\begin{align*}
||(\rho_{\varepsilon},u_{\varepsilon},\theta_{\varepsilon})||_{E(T)}&\leq \mathfrak{C} ||(\rho,u,\theta)||_{E(0)}+\mathfrak{C}||(\rho_{\varepsilon},u_{\varepsilon},\theta_{\varepsilon})||^2_{E(T)}+\mathfrak{C}||(\rho_{\varepsilon},u_{\varepsilon},\theta_{\varepsilon})||^3_{E(T)}
\\& \leq \mathfrak{C} ||(\rho,u,\theta)||_{E(0)}(1+4\mathfrak{C}^2\eta+8\mathfrak{C}^3\eta^2).
\end{align*}
Let $\eta<\frac{1}{4(\mathfrak{C}^2+1)}$. This implies
$$||(\rho_{\varepsilon},u_{\varepsilon},\theta_{\varepsilon})||_{E(T_0)}\leq \mathfrak{C} ||(\rho,u,\theta)||_{E(0)}(1+\frac12+\frac14)\leq \frac74\mathfrak{C} ||(\rho,u,\theta)||_{E(0)}< 2\mathfrak{C}\eta.$$
We also assume that $\eta$ is small enough such that $||\rho_{\varepsilon}(t,\cdot)||_{L^\infty}\leq \frac13$ and $||\theta_{\varepsilon}(t,\cdot)||_{L^\infty}\leq \frac13$. The standard bootstrap argument will ensure that, for all $0<T<\infty$, the following inequality holds
$$||(\rho_{\varepsilon},u_{\varepsilon},\theta_{\varepsilon})||_{E(T)}\leq 2\mathfrak{C} ||(\rho,u,\theta)||_{E(0)}.$$

\textbf{Step 3: Existence of the solution.} The existence of the solution $(\rho,u,\theta)\in E(T)$ of (\ref{1.2}) can be deduced by a standard compactness argument to the approximation sequence $(\rho_{\varepsilon},u_{\varepsilon},\theta_{\varepsilon})$. Moreover, there holds for all $0<T<\infty$,
$$||(\rho,u,\theta)||_{E(T)}\leq 2\mathfrak{C} ||(\rho,u,\theta)||_{E(0)}.$$

\textbf{Step 4: Uniqueness of the solution.} We will show that the solution guaranteed by Step 3 is unique. Suppose that $(\rho^1,u^1,\theta^1)\in E(T)$ and $(\rho^2,u^2,\theta^2)\in E(T)$ are two solutions of the system $(\ref{1.2})$. Denote $\delta \rho=\rho^1-\rho^2$, $\delta u=u^1-u^2$ and $\delta \theta=\theta^1-\theta^2$. Then, we have
\begin{equation}\label{4.2}\begin{cases}
\partial_t\delta\rho+\mathrm{div} \delta u=H_1,\\
\partial_t \delta u+\nabla \delta\rho+\nabla \delta\theta-\mu(1)(\Delta\delta u+\nabla\mathrm{div}\delta u)=H_2,\\
\partial_t\delta\theta-\kappa\Delta \delta\theta=H_3,
\end{cases}\end{equation}
where
$$
H_1=-\mathrm{div}(\delta\rho u^1)-\mathrm{div}(\rho^2\delta u),$$
\begin{align*}
H_2=&(\frac{\mu(\rho^1+1)}{\rho^1+1}-\mu(1))(\Delta \delta u+\nabla\mathrm{div}\delta u)+(\frac{\mu(\rho^1+1)}{\rho^1+1}-\frac{\mu(\rho^2+1)}{\rho^2+1})(\Delta u^2+\nabla\mathrm{div} u^2)
\\
& +\frac{\nabla \rho^1}{\rho^1+1}\delta\theta+(\frac{\nabla \rho^1}{\rho^1+1}-
\frac{\nabla \rho^2}{\rho^2+1})\theta^2+\frac{\mu'(\rho^1+1)}{\rho^1+1}(\nabla u^1+(\nabla u^1)^T)\cdot\nabla \delta\rho
\\& +\frac{\mu'(\rho^1+1)}{\rho^1+1}(\nabla \delta u+(\nabla\delta u)^T)\cdot\nabla \rho^2+(\frac{\mu'(\rho^1+1)}{\rho^1+1}-\frac{\mu'(\rho^2+1)}{\rho^2+1})(\nabla u^2+(\nabla u^2)^T)\cdot\nabla \rho^2\\
& -u^1\cdot \nabla\delta u-\delta u\cdot \nabla u^2-(P'_e(\rho^1+1)-1)\nabla \delta\rho-(P'_e(\rho^1+1)-P'_e(\rho^2+1))\nabla \rho^2,
\end{align*}
\begin{align*}
H_3=&(\frac{\kappa(\theta^1)}{\rho^1+1}-\kappa)\Delta \delta\theta+(\frac{\kappa(\theta^1)}{\rho^1+1}-\frac{\kappa(\theta^2)}{\rho^2+1})\Delta \theta^2-\theta^1\mathrm{div} \delta u-\delta \theta \mathrm{div} u^2-u^1\cdot \nabla \delta \theta-\delta u\cdot \nabla \theta^2
\\ & +\frac{\mu(\rho^1+1)}{\rho^1+1}(\nabla u^1+(\nabla u^1)^T):\nabla \delta u+\frac{\mu(\rho^1+1)}{\rho^1+1}(\nabla \delta u+(\nabla \delta u)^T):\nabla u^2
\\ & +(\frac{\mu(\rho^1+1)}{\rho^1+1}-\frac{\mu(\rho^2+1)}{\rho^2+1})(\nabla u^2+(\nabla  u^2)^T):\nabla u^2+\frac{\kappa'(\theta^1)}{\rho^1+1}\nabla (\theta^1+\theta^2)\cdot\nabla \delta\theta
\\ &+(\frac{\kappa'(\theta^1)}{\rho^1+1}-\frac{\kappa'(\theta^2)}{\rho^2+1})|\nabla \theta^2|^2.
\end{align*}
Multiplying the first equation of $(\ref{4.2})$ by $\delta\rho$ and integrating by parts, it follows that
\begin{align}\label{4.4}
\frac12\frac{\mathrm{d}}{\mathrm{d} t}||\delta\rho||^2_{L^2}+(\mathrm{div}\delta u,\delta\rho)=(H_1,\delta\rho).
\end{align}
Multiplying the second equation of $(\ref{4.2})$ by $\delta u$ and integrating by parts, we have
\begin{align}\label{4.5}
\frac12\frac{\mathrm{d}}{\mathrm{d} t}||\delta u||^2_{L^2}+(\nabla \delta\rho,\delta u)+(\nabla \delta \theta,\delta u)+\mu(1)(||\nabla \delta u||^2_{L^2}+||\mathrm{div} \delta u||^2_{L^2})=(H_2,\delta u).
\end{align}
Multiplying the third equation of $(\ref{4.2})$ by $\delta \theta$ and integrating by parts, we get
\begin{align}\label{4.6}
\frac12\frac{\mathrm{d}}{\mathrm{d} t}||\delta \theta||^2_{L^2}+\kappa||\nabla\delta\theta||^2_{L^2}=(H_3,\delta \theta).
\end{align}
In view of $(\mathrm{div}\delta u,\delta\rho)+(\nabla \delta\rho,\delta u)=0,$
we can deduce from $(\ref{4.4})-(\ref{4.6})$ that
\begin{align}\label{4.7}
& \quad \ \frac12\frac{\mathrm{d}}{\mathrm{d} t}(||\delta \rho||^2_{L^2}+||\delta u||^2_{L^2}+||\delta \theta||^2_{L^2})+\kappa||\nabla \delta \theta||^2_{L^2}+\mu(1)(||\nabla \delta u||^2_{L^2}+||\mathrm{div} \delta u||^2_{L^2})\nonumber\\
& =-(\nabla \delta \theta,\delta u)+(H_1,\delta \rho)+(H_2,\delta u)+(H_3,\delta \theta).
\end{align}
According to the Sobolev embedding relation, for all $a\in H^1,b\in H^{s-1}$, we have
\begin{align}\label{4.3}
||ab||_{L^2}\lesssim ||b||_{H^{s-1}}||a||_{H^1} \quad \mathrm{for} \quad d=2,3,4.
\end{align}
Thanks to Lemmas $\ref{le2.3}-\ref{le2.5}$, H$\ddot{\mathrm{o}}$lder's inequality and (\ref{4.3}), one has
\begin{align}\label{4.9}
|(H_1,\delta \rho)|&\lesssim \Big|\int_{\mathbb{R}^d}\mathrm{div} u^1|\delta \rho|^2\mathrm{d} x\Big|+\Big|\int_{\mathbb{R}^d}\rho^2\mathrm{div} \delta u\delta\rho\mathrm{d} x\Big|+\Big|\int_{\mathbb{R}^d}\delta u\nabla\rho^2\delta\rho\mathrm{d} x\Big| \nonumber
\\& \lesssim ||\nabla u^1||_{H^{s}}||\delta \rho||^2_{L^2}+||\rho^2||_{H^{s}}||\nabla \delta u||_{L^2}||\delta \rho||_{L^2}+||\delta \rho||_{L^2}||\nabla \rho^2||_{H^{s-1}}||\delta u||_{H^1} \nonumber
\\& \lesssim ||\nabla u^1||_{H^{s}}||\delta \rho||^2_{L^2}+||\rho^2||_{H^{s}}||\delta u||_{H^1}||\delta \rho||_{L^2}.
\end{align}
On the one hand, it follows by Lemmas $\ref{le2.3}-\ref{le2.5}$ and (\ref{4.3}) that
\begin{align}\label{xiao1}
|(u^1\cdot \nabla \delta u+\delta u\cdot \nabla u^2,\delta u)|\lesssim (||\nabla u^1||_{H^{s}}+||\nabla u^2||_{H^{s}})||\delta u||^2_{L^2},
\end{align}
\begin{align}\label{xiao2}
& \quad \ \Big|\Big(\frac{\mu'(\rho^1+1)}{\rho^1+1}(\nabla \delta u+(\nabla\delta u)^T)\cdot\nabla \rho^2,\delta u\Big)\Big|\lesssim ||\delta u||_{H^1}||\nabla \delta u||_{L^2}||\nabla \rho^2||_{H^{s-1}}\nonumber\\&\lesssim ||\rho^2||_{H^{s}}||\delta u||_{H^1}||\nabla \delta u||_{L^2}\lesssim \eta ||\nabla \delta u||^2_{L^2}+||\rho^2||_{H^{s}}||\delta u||_{L^2}||\nabla \delta u||_{L^2},
\end{align}
\begin{align}\label{xiao5}
\Big|(\frac{\nabla \rho^1}{\rho^1+1}\delta\theta,\delta u)\Big|\lesssim ||\nabla \rho^1||_{H^{s-1}}||\delta u||_{H^1}||\delta \theta||_{L^2}\lesssim || \rho^1||_{H^{s}}||\delta u||_{H^1}||\delta \theta||_{L^2},
\end{align}
\begin{align}\label{xiao3}
& \quad \ \left|\left((\frac{\mu(\rho^1+1)}{\rho^1+1}-\mu(1))(\Delta \delta u+\nabla\mathrm{div}\delta u),\delta u\right)\right|\nonumber\\&
\lesssim \left(\frac{\mu(\rho^1+1)}{\rho^1+1}-\mu(1),|\nabla \delta u|^2+|\mathrm{div} \delta u|^2\right)+\left(\nabla(\frac{\mu(\rho^1+1)}{\rho^1+1})\otimes  \delta u,\nabla \delta u\right)
\nonumber\\& \quad \ +\left(\delta u\cdot \nabla(\frac{\mu(\rho^1+1)}{\rho^1+1}),\mathrm{div}\delta u\right)
\nonumber\\&\lesssim \Big\|\frac{\mu(\rho^1+1)}{\rho^1+1}-\mu(1)\Big\|_{H^{s}}||\nabla \delta u||^2_{L^2}+||\delta u||_{H^1}||\nabla \delta u||_{L^2}\Big\|\nabla(\frac{\mu(\rho^1+1)}{\rho^1+1}-\mu(1))\Big\|_{H^{s-1}}
\nonumber\\& \lesssim ||\rho^1||_{H^{s}}||\delta u||_{H^1}||\nabla \delta u||_{L^2}\lesssim \eta ||\nabla \delta u||^2_{L^2}+||\rho^1||_{H^{s}}||\delta u||_{L^2}||\nabla \delta u||_{L^2},
\end{align}
\begin{align}\label{xiao4}
&\quad \ \Big|\Big((\frac{\mu(\rho^1+1)}{\rho^1+1}-\frac{\mu(\rho^2+1)}{\rho^2+1})(\Delta u^2+\nabla\mathrm{div} u^2),\delta u\Big)\Big|
\nonumber\\&\lesssim \Big\|\frac{\mu(\rho^1+1)}{\rho^1+1}-\frac{\mu(\rho^2+1)}{\rho^2+1}\Big\|_{L^2}||\delta u||_{H^1}||\Delta u^2||_{H^{s-1}}
\lesssim ||\nabla u^2||_{H^{s}}||\delta \rho||_{L^2}||\delta u||_{H^1},
\end{align}

\begin{align}\label{xiao6}
\quad \ \Big|\Big((\frac{\nabla \rho^1}{\rho^1+1}-\frac{\nabla \rho^2}{\rho^2+1})\theta^2,\delta u\Big)\Big|=&|(\ln(\rho^1+1)-\ln(\rho^2+1),\mathrm{div}(\theta^2\delta u))|
\nonumber\\
&\lesssim ||\delta \rho||_{L^2}||\theta^2\delta u||_{H^1} \lesssim ||\theta^2||_{H^{s}}||\delta u||_{H^1}||\delta \rho||_{L^2},
\end{align}
\begin{align}\label{xiao7}
\Big|\Big(\frac{\mu'(\rho^1+1)}{\rho^1+1}(\nabla u^1+(\nabla u^1)^T)\cdot\nabla \delta\rho,\delta u\Big)\Big|&\lesssim ||\nabla \delta \rho||_{H^{-1}}(1+||\rho^1||_{H^{s}})||\nabla u^1||_{H^{s}}||\delta u||_{H^1}\nonumber\\& \lesssim (1+||\rho^1||_{H^{s}})||\nabla u^1||_{H^{s}}||\delta u||_{H^1}||\delta \rho||_{L^2},
\end{align}

\begin{align}\label{xiao8}
\Big|\Big((\frac{\mu'(\rho^1+1)}{\rho^1+1}-\frac{\mu'(\rho^2+1)}{\rho^2+1})(\nabla u^2+(\nabla u^2)^T)\cdot\nabla \rho^2,\delta u\Big)\Big|\lesssim ||\delta \rho||_{L^2}||\delta u||_{H^1}||\nabla \rho^2||_{H^{s-1}}||\nabla u^2||_{H^{s}},
\end{align}
\begin{align}\label{xiao9}
& \quad \ |((P'_e(\rho^1+1)-1)\nabla \delta\rho+(P'_e(\rho^1+1)-P'_e(\rho^2+1))\nabla \rho^2,\delta u)|\nonumber\\&\lesssim ||\rho^1||_{H^{s}}||\nabla \delta \rho||_{H^{-1}}||\delta u||_{H^1}+||\delta \rho||_{L^2}||\nabla \rho^2||_{H^{s-1}}||\delta u||_{H^1}\nonumber\\&\lesssim (||\rho^1||_{H^{s}}+||\rho^2||_{H^{s}})||\delta\rho||_{L^2}||\delta u||_{H^1}.
\end{align}
Combining the above inequalities \eqref{xiao1}--\eqref{xiao9}, we have
\begin{align}\label{4.10}
|(H_2,\delta u)|\lesssim \eta||\nabla \delta u||^2_{L^2}+\mathcal{P}(t)||\delta u||_{H^1}(||\delta \rho||_{L^2}+||\delta u||_{L^2}+||\delta \theta||_{L^2}),
\end{align}
where $\mathcal{P}(t)$ depends on $||\rho^1||_{H^{s}}$, $||\rho^2||_{H^{s}}$, $||u^1||_{H^{s+1}}$, $||u^2||_{H^{s+1}}$, $||\theta^1||_{H^{s+1}}$, $||\theta^2||_{H^{s+1}}$.

On the other hand, by Lemmas $\ref{le2.3}-\ref{le2.5}$ and (\ref{104}), (\ref{4.3}), we also obtain
\begin{align}\label{xiao10}
& \quad \ |(\theta^1\mathrm{div} \delta u+\delta \theta \mathrm{div} u^2+u^1\cdot \nabla \delta \theta+\delta u\cdot \nabla \theta^2,\delta\theta)|
\nonumber\\&\lesssim (||u^1||_{H^{s}}+||u^2||_{H^{s}}+||\theta^1||_{H^{s}}+||\theta^2||_{H^{s}})(||\delta u||_{H^1}+||\delta \theta||_{H^1})||\delta \theta||_{L^2},
\end{align}
\begin{align}\label{xiao11}
\Big|\Big((\frac{\kappa(\theta^1)}{\rho^1+1}-\frac{\kappa(\theta^2)}{\rho^2+1})\Delta \theta^2,\delta \theta\Big)\Big|&\lesssim (||\delta\theta||_{L^2}+||\delta\rho||_{L^2})||\delta\theta||_{H^1}||\Delta\theta^2||_{H^{s-1}}
\nonumber\\&\lesssim ||\nabla \theta^2||_{H^{s}}(||\delta\rho||_{L^2}+||\delta \theta||_{L^2})||\delta\theta||_{H^1},
\end{align}
\begin{align}\label{xiao12}
 \Big|\Big((\frac{\kappa'(\theta^1)}{\rho^1+1}-\frac{\kappa'(\theta^2)}{\rho^2+1})|\nabla \theta^2|^2,\delta\theta\Big)\Big|
\lesssim ||\nabla\theta^2||^2_{H^{s}}(||\delta \rho||_{L^2}+||\delta \theta||_{L^2})||\delta\theta||_{L^2},
\end{align}
\begin{align}\label{xiao13}
&\quad \ \Big|\Big(\frac{\mu(\rho^1+1)}{\rho^1+1}(\nabla u^1+(\nabla u^1)^T):\nabla \delta u+\frac{\mu(\rho^1+1)}{\rho^1+1}(\nabla \delta u+(\nabla \delta u)^T):\nabla u^2,\delta \theta\Big)\Big|\nonumber\\&
\lesssim (||\nabla u^1||_{H^{s}}+||\nabla u^2||_{H^{s}})||\nabla \delta u||_{L^2}||\delta \theta||_{L^2},
\end{align}
\begin{align}\label{xiao14}
\Big|\Big(\frac{\kappa'(\theta^1)}{\rho^1+1}\nabla(\theta^1+\theta^2)\cdot\nabla\delta\theta,\delta\theta\Big)\Big|\lesssim (||\nabla \theta^1||_{H^{s}}+||\nabla \theta^2||_{H^{s}})||\delta\theta||_{L^2}||\nabla\delta\theta||_{L^2},
\end{align}
\begin{align}\label{xiao15}
\Big|\Big((\frac{\mu(\rho^1+1)}{\rho^1+1}-\frac{\mu(\rho^2+1)}{\rho^2+1})(\nabla u^2+(\nabla  u^2)^T):\nabla u^2,\delta \theta\Big)\Big|\lesssim ||\nabla u^2||^2_{H^{s}}||\delta \rho||_{L^2}||\delta\theta||_{H^1},
\end{align}
\begin{align}\label{xiao16}
\Big|\Big((\frac{\kappa(\theta^1)}{\rho^1+1}-\kappa)\Delta \delta\theta,\delta\theta\Big)\Big|&\lesssim |(\delta\theta\nabla(\frac{\kappa(\theta^1)}{\rho^1+1}-\kappa),\nabla \delta\theta)|+|(\frac{\kappa(\theta^1)}{\rho^1+1}-\kappa,|\nabla \delta\theta|^2)| \nonumber
\nonumber\\& \lesssim \Big\|\nabla(\frac{\kappa(\theta^1)}{\rho^1+1}-\kappa)\Big\|_{H^{s-1}}||\nabla \delta \theta||_{L^2}||\delta \theta||_{H^1}
+\Big\|\frac{\kappa(\theta^1)}{\rho^1+1}-\kappa\Big\|_{H^{s}}||\nabla \delta \theta||^2_{L^2} \nonumber
\nonumber\\& \lesssim (||\theta^1||_{H^{s}}+||\rho^1||_{H^{s}})||\delta \theta||_{H^1}||\nabla \delta\theta||^2_{L^2}
\nonumber\\& \lesssim \eta ||\nabla \delta\theta||^2_{L^2}+(||\theta^1||_{H^{s}}+||\rho^1||_{H^{s}})||\delta \theta||_{L^2}||\nabla \delta\theta||_{L^2}.
\end{align}

Combining the above inequalities \eqref{xiao10}-\eqref{xiao16}, we have
\begin{align}\label{4.11}
 |(H_3,\delta\theta)|\lesssim \eta||\nabla\delta \theta||^2_{L^2}+\mathcal{Q}(t)(||\delta u||_{H^1}+||\delta \theta||_{H^1})(||\delta \rho||_{L^2}+||\delta u||_{L^2}+||\delta \theta||_{L^2}),
\end{align}
where $\mathcal{Q}(t)$ depends on $||\rho^1||_{H^{s}}$, $||\rho^2||_{H^{s}}$, $||u^1||_{H^{s+1}}$, $||u^2||_{H^{s+1}}$, $||\theta^1||_{H^{s+1}}$, $||\theta^2||_{H^{s+1}}$.

Therefore, inserting (\ref{4.9}), (\ref{4.10}) and (\ref{4.11}) into (\ref{4.7}) and choosing $\eta$ small enough, we finally deduce that
\begin{align}\label{4.12}
&\quad \ \frac{\mathrm{d}}{\mathrm{d} t}(||\delta \rho||^2_{L^2}+||\delta u||^2_{L^2}+||\delta \theta||^2_{L^2})+\kappa||\nabla \delta \theta||^2_{L^2}+\mu(1)(||\nabla \delta u||^2_{L^2}+||\mathrm{div} \delta u||^2_{L^2})\nonumber
\\& \lesssim \mathcal{R}(t)(||\delta u||_{H^1}+||\delta \rho||_{H^1})(||\delta \rho||_{L^2}+||\delta u||_{L^2}+||\delta \theta||_{L^2}) \nonumber
\\&\lesssim (\mathcal{R}(t)+\frac{1}{\varepsilon}\mathcal{R}^2(t))(||\delta \rho||^2_{L^2}+||\delta u||^2_{L^2}+||\delta \theta||^2_{L^2})+\varepsilon(||\nabla \delta u||^2_{L^2}+||\nabla \delta \theta||^2_{L^2}),
\end{align}
where $\mathcal{R}(t)$ depends on $||\rho^1||_{H^{s}}$, $||\rho^2||_{H^{s}}$, $||u^1||_{H^{s+1}}$, $||u^2||_{H^{s+1}}$, $||\theta^1||_{H^{s+1}}$, $||\theta^2||_{H^{s+1}}$. Now let us choose $\varepsilon$ small enough and according to the Gronwall inequality, then (\ref{4.12}) implies $(\delta\rho(t),\delta u(t),\delta\theta(t))=(0,0,0)$ for all $t\in[0,\infty)$. This completes the proof of the uniqueness about the solution to the system (\ref{1.2}).

\vspace*{1em}
\noindent\textbf{Acknowledgements.} This work was
partially supported by NNSFC (No. 11271382), RFDP (No. 20120171110014), MSTDF (No. 098/2013/A3), Guangdong Special Support Program (No. 8-2015) and the key project of NSF of Guangdong Province (No. 1614050000014).

\end{document}